\newcommand{\ncom}{\newcommand}
\ncom{\bq}{\begin{equation}}
\ncom{\eq}{\end{equation}}
\ncom{\beqn}{\begin{eqnarray*}}
\ncom{\eeqn}{\end{eqnarray*}}
\ncom{\beq}{\begin{eqnarray}}
\ncom{\eeq}{\end{eqnarray}}
\ncom{\been}{\begin{enumerate}}
\ncom{\eeen}{\end{enumerate}}
\ncom{\olin}{\overline}
\ncom{\f}{\frac}
\ncom{\rar}{\rightarrow}
\def\a{{\bf a}}
\def\theequation{\thesection.\@arabic \c@equation}
\def\@citecolor{blue}
\def\@linkcolor{blue}
\def\@urlcolor{blue}
\def\@citecolor{blue}
\def\@linkcolor{blue}
\def\@urlcolor{blue}
\def\theequation{\arabic{equation}}
\def\theequation{\thesection.\arabic{equation}}
\numberwithin{equation}{section}
\def\Ass{\operatorname{Ass}}
\def\charac{\operatorname{char}}
\def\depth{\operatorname{depth}}
\def\dim{\operatorname{dim}}
\def\height{\operatorname{ht}}
\def\charac{\operatorname{char}}
\def\minass{\operatorname{MinAss}}
\def\minass{\operatorname{MinAss}}
\newcommand{\kk}{\Bbbk}
\def\lrar{{\longrightarrow}}
\def\A{\mathbb A}
\def\C{\mathbb C}
\def\Gr{\mathcal G}
\def\N{\mathbb N}
\def\P{\mathbb P}
\def\R{\mathcal R}
\def\Z{\mathbb Z}
\newcommand{\pp}{\mathtt p}
\def\af{\mathfrak a}
\def\m{\mathfrak m}
\def\n{\mathfrak n}
\def\p{\mathfrak p}
\newcommand{\ov}{\overline}
\newcommand{\lm}{{\lambda}}
\theoremstyle{plain}
\newtheorem{theorem}[equation]{Theorem}
\newtheorem{proposition}[equation]{Proposition}
\newtheorem{lemma}[equation]{Lemma}
\theoremstyle{definition}
\newtheorem{example}[equation]{Example}
\ncom{\bib}{\bibitem}
\ncom{\limns}{\underset{\underset{s}{\longrightarrow}}{\lim}}
\ncom{\limnr}{\underset{\underset{r}{\longrightarrow}}{\lim}}
\ncom{\maxi}{\underset{\underset{}{i}}{\max}}
\ncom{\limm}{\underset{{ n \to \infty}}{\lim}}
\ncom{\Tprime}{T^{\prime}}
\ncom{\mprime}{\m^{\prime}}
\begin{document}
\title[ ] {Symbolic Rees algebras and \\ [2mm] set-theoretic complete intersections} 
 \author[Clare D'Cruz]{Clare D'Cruz$^*$}
\address{Chennai Mathematical Institute, Plot H1 SIPCOT IT Park, Siruseri, 
Kelambakkam 603103, Tamil Nadu, 
India
} 
\email{clare@cmi.ac.in} 

\author[Mousumi Mandal] {Mousumi Mandal$^\dag$}
\address{Department of Mathematics, Indian Institute of Technology Kharagpur, 721302, India}
\email{mousumi@maths.iitkgp.ac.in}

\author{J. K. Verma}
\address{Department of Mathematics, Indian Institute of Technology, Mumbai, 400076, India}
\email{jkv@iitb.ac.in}

\keywords{Symbolic  Rees algebra, set-theoretic complete intersections, Fermat ideal, Jacobian ideal, edge ideal}
\thanks{$^*$ partially supported by a grant from Infosys Foundation}
 \thanks{$^\dag$ Supported by SERB(DST) grant No.: $\mbox{MTR}/2020/000429$, India}
 \subjclass[2010]{Primary: 13A30, 1305, 13H15, 13P10} 
 
 \begin{abstract}
 In this paper we extend a result of Cowsik on set-theoretic complete intersection  and a result  Huneke, Morales and Goto and Nishida  about Noetherian symbolic Rees algebras of ideals. As applications, we show that the symbolic Rees algebras of the following ideals are Noetherian and the ideals are set-theoretic complete intersections: (a) the edge ideal of a complete graph, (b)  the Fermat ideal  and (c) the Jacobian ideal of a certain hyperplane arrangement.
 \end{abstract}
 \maketitle
 \section{Introduction}

  Throughout this paper $(R, \m)$ is a Noetherian local ring of positive dimension $d$ with  infinite residue field and $ \a \subset R$  is an ideal.
  We say that $\a$  is defined set-theoretically by $s$ elements if there exists $f_1, \ldots, f_s \in \m$ such that $\sqrt{\a} = \sqrt{(f_1, \ldots, f_s)}$. If $s = h:= \height(
  \a)$ and $\sqrt{\a} = \sqrt{(f_1, \ldots, f_h)}$ then  $\a$ is called set-theoretic complete intersection.

One of the remarkable results on set-theoretic complete intersection was proved by   R. C. Cowsik and M. V. Nori.  They showed   that if the characteristic of a field $\kk$, $\charac(\kk) = p >0$, then  the defining ideal of any affine curve in $\A_{\kk}^n$ is a set-theoretic complete intersection (\cite[Theorem~1]{cowsik-nori}).
J. Herzog  proved  that all 
monomial curves in the affine space $\A^3_{\kk}$  over any field $\kk$ are set-theoretic complete intersections. 
This  was  also proved by H. Bresinsky \cite{bresinsky-1979}. 
It has been a long standing open problem to characterise    ideals that   are set-theoretic complete intersections.  We shall review a few of these results.

Symbolic powers of ideals have  played an important role in connection with the problem of set-theoretic complete intersections. The $n$-th symbolic power of an ideal $\a$ is defined as
 ${\displaystyle \a^{(n)}=\bigcap_{\p\in \Ass(\a)}(\a^nR_{\p}\cap R)}$.  
 It has also been of importance because  of its connection with algebraic geometry which  goes back to the work of  O. Zariski \cite{zariski} and M. Nagata  \cite{nagata}.  If $\kk$ is an algebraically closed  field, then the  $n$-th symbolic power of a given prime ideal consists of  functions  that vanish up to order $n$ on the algebraic  variety defined by the prime ideal \cite{eisenbud}. 

Let ${\displaystyle \R_s(\a):= \bigoplus_{n \geq 0} \a^{(n)} t^n}$ denote the symbolic Rees algebra of an ideal $\a$. The study of the symbolic Rees algebras of ideals in a Noetherian local domain was of interest due to a result of D. Rees which gave a counter-example to a problem of Zariski using symbolic Rees algebras.
Rees showed that   if $\p$ is a prime ideal  with $\height(\p)=1$ in a Noetherian normal local domain   of dimension two \cite{rees} then the symbolic Rees algebra is Noetherian if and only if some symbolic power of $\p$ is principal.
Rees' work was generalized  by A. Ooishi  \cite{ooishi-1985}. The above results were studied under the condition $\dim (R/ \a)=1$. In 1990, Goto. et. al studied the Noetherian property of any unmixed ideal in a Noetherian local ring \cite{goto}. 
The symbolic Rees algebra of prime ideals has also been studied in \cite{huneke-1982},  \cite{roberts-1985}, \cite{katz-ratliff-1986}, \cite{schenzel},  \cite{schenzel-1988}, \cite{sannai-2019} \cite{grifo-2021}.

In $1981$ R. C. Cowsik proved  the following remarkable   result  that rejuvenated  interest in symbolic powers of an ideal:
 \begin{theorem}
 \cite{cowsik-1981} Let 
$(R, \m)$ be a Noetherian  local ring of dimension $d$ and $\a \not =\m $ a radical ideal. If  the symbolic Rees algebra 
$\R_{s}(\a) := \oplus_{n \geq 0} \a^{(n)}$ is Noetherian, then $\a$
 is the radical of an ideal generated by $d-1$ elements. In particular, if $\a$ has height $d-1$, then it is a set-theoretic complete intersection.
\end{theorem}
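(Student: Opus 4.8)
The plan is to reduce the statement about symbolic powers to a statement about ordinary powers of a single ideal, and then to bound the number of set-theoretic generators by the analytic spread.

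First I would exploit the Noetherian hypothesis. Since $\R_s(\a)$ is a Noetherian $\N$-graded ring with degree-zero part $R$, some Veronese subalgebra is standard graded: there is an integer $k \geq 1$ with $\a^{(kn)} = (\a^{(k)})^n$ for all $n \geq 1$. Writing $\q := \a^{(k)}$, we then have $\q^{\,n} = \a^{(kn)}$, and because $\a$ is radical one checks that $\sqrt{\q} = \a$. This replaces the symbolic filtration by the ordinary powers of the single ideal $\q$, to which the classical theory of reductions applies.

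The crux is then to show that the analytic spread satisfies $\ell(\q) \leq d-1$. The key observation is that $\q^{\,n} = \a^{(kn)}$ is a symbolic power, hence has no embedded primes, so $\Ass(R/\q^{\,n}) = \minAss(\a)$ for every $n$. Since $\a$ is radical and $\a \neq \m$, the ring $R/\a$ is reduced and not a field, whence $\dim R/\a \geq 1$ and $\m \notin \minAss(\a)$. Therefore $\m \notin \Ass(R/\q^{\,n})$, i.e.\ $\depth(R/\q^{\,n}) \geq 1$ for all $n$. Burch's inequality
\[
\ell(\q) \;\leq\; \dim R - \min_{n \geq 1}\depth\bigl(R/\q^{\,n}\bigr) \;\leq\; d-1
\]
now delivers the required bound.

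Finally I would invoke reduction theory. As the residue field is infinite, the Northcott--Rees theorem yields a minimal reduction $J$ of $\q$ generated by $\ell(\q) \leq d-1$ elements; enlarging $J$ by elements of $\a$ if necessary, write $J = (f_1, \ldots, f_{d-1})$. Since $J$ is a reduction, $J\q^{\,n} = \q^{\,n+1}$ for $n \gg 0$, so $\sqrt{J} = \sqrt{\q} = \a$, proving that $\a$ is the radical of an ideal generated by $d-1$ elements. When $\height(\a) = d-1$ this is exactly a set-theoretic complete intersection. The hard part will be the passage to ordinary powers together with the verification that $\m$ remains outside the associated primes of every $\q^{\,n}$, for it is precisely this depth bound that forces the analytic spread strictly below $d$; once that is in place, Burch's inequality and the Northcott--Rees theorem finish the argument routinely.
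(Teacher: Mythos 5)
Your proof is correct. The paper itself does not reprove this theorem---it is quoted from \cite{cowsik-1981}---but your argument is exactly the skeleton the paper uses for its Theorem~\ref{analytic}: replace the symbolic filtration by the ordinary powers of $\q=\a^{(k)}$ via a Veronese subalgebra (the paper invokes \cite[Theorem~3.2]{ghnv} for this step), bound the analytic spread by Burch's inequality \cite{burch}, and conclude with a Northcott--Rees minimal reduction, available since the residue field is infinite (a standing assumption of the paper; Cowsik's original statement over a finite residue field needs an extra reduction, e.g.\ passing to $R[t]_{\m R[t]}$). The one genuine difference is the source of the depth bound: in Theorem~\ref{analytic} the hypothesis that $R/\a^{(n)}$ is Cohen--Macaulay for large $n$ yields $\depth(R/\q^n)=d-\height(\a)$ and hence the stronger conclusion $\ell(\q)=\height(\a)$, whereas you use only that $\q^n=\a^{(kn)}$ is an irredundant intersection of primary ideals belonging to the minimal primes of $\a$, so that $\Ass(R/\q^n)=\minass(R/\a)$ omits $\m$ and $\depth(R/\q^n)\geq 1$; this weaker estimate $\ell(\q)\leq d-1$ is all the statement needs, and it requires no Cohen--Macaulay hypothesis. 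Your justification of that key point is sound: $\m$ cannot be a minimal prime of the radical ideal $\a$ unless $\a=\m$, and the symbolic powers have no embedded primes by construction, so the analytic spread is forced strictly below $d$.
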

The converse of Cowik's result  need not be true \cite[Corollary~1.2]{goto-nis-wat}. One of the main results in this paper is to  generalize  Cowsik's result on set-theoretic complete intersections. 
Cowsik's   work motivated several researchers to investigate the Noetherian property of the symbolic Rees algebra. In 1987,  C. Huneke gave necessary and sufficient conditions for 
$\R_{s}(\p)$ to be Noetherian when $R$ is a $3$-dimensional regular local ring and $\p$ is a height two prime ideal \cite{huneke-1987}. Huneke's result  was 
generalised  for $\dim R \geq 3$ by M. Morales  in \cite{morales-1991}  and later  by S. Goto \cite{goto}. We state the most general form of the result.
Recall that a local ring $R$ is called unmixed if all the associated primes of the completion of $R$  have the same dimension.

\begin{theorem}
\cite[Theorem~1.1]{goto}
Let $(R, \m)$ be a  local ring of dimension $d$ and $\p$ a prime ideal of height $d-1$. 
\been
\item
Suppose $\R_s(\p)$ is Noetherian. Then  there exists  a system of parameters $x, f_1, \ldots, f_{d-1}$ such that 
 elements $f_i \in \p^{(k_i)}$, $i=1, \ldots, d-1$ with $k_i>0$, satisfying 
\beq
\label{goto condition}
e_{(x, f_1, \ldots, f_{d-1}) }(R)
=    e(R_{\p}) \lm \left( \f{R}{ (\p,x) }\right)   \prod_{i=1}^{d-1} k_i  .
\eeq
 \item
If $R$ is an unmixed local ring containing   a system of parameters $x, f_1, \ldots, f_{d-1}$ satisfying (\ref{goto condition}), then $\R_s(\p)$ is Noetherian.
\eeen
\end{theorem}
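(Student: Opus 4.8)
The plan is to read the numerical identity (\ref{goto condition}) as the equality case of a multiplicity inequality, and then to convert that equality case into an integral-dependence statement forcing $\R_s(\p)$ to be module-finite over a subalgebra generated by the $d-1$ elements $f_it^{k_i}$. Set $Q=(x,f_1,\ldots,f_{d-1})$ and $I=(f_1,\ldots,f_{d-1})$. Since each $f_i\in\p^{(k_i)}\subseteq\p$ we have $I\subseteq\p$, while $(\p,x)$ being $\m$-primary forces $x\notin\p$; hence $f_1,\ldots,f_{d-1}$ is a system of parameters for the $(d-1)$-dimensional ring $R_\p$.

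Applying the associativity formula for multiplicities to the splitting $Q=(x)+(f_1,\ldots,f_{d-1})$ gives
\[
e_{Q}(R)=\sum_{\p'} e_{(x)}(R/\p')\,e_{IR_{\p'}}(R_{\p'}),
\]
where $\p'$ runs over the one-dimensional primes minimal over $I$. The prime $\p$ is one of these, and since $R/\p$ is a one-dimensional domain, hence Cohen--Macaulay, we get $e_{(x)}(R/\p)=\lm(R/(\p,x))$. Moreover $\p^{(k_i)}R_\p=(\p R_\p)^{k_i}$, so each $f_i$ lies in the $k_i$-th power of the maximal ideal of $R_\p$; the standard order inequality for a system of parameters then yields $e_{IR_\p}(R_\p)\ge k_1\cdots k_{d-1}\,e(R_\p)$, with equality exactly when the initial forms $f_1^{*},\ldots,f_{d-1}^{*}$ form a homogeneous system of parameters in $\operatorname{gr}_{\p R_\p}(R_\p)$. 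As every summand is nonnegative and the $\p$-term alone is at least $\lm(R/(\p,x))\,e(R_\p)\prod_i k_i$, I conclude that (\ref{goto condition}) holds \emph{if and only if} (a) the initial forms of the $f_i$ form a homogeneous system of parameters in $\operatorname{gr}_{\p R_\p}(R_\p)$, and (b) $\p$ is the only one-dimensional prime minimal over $I$.

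For the sufficiency statement (2), I would use (a) and (b) to produce an integral-dependence relation. With $R$ unmixed (formally equidimensional), the Rees multiplicity theorem is available: equality of multiplicities between comparable ideals forces equality of integral closures. Writing $B=R[f_1t^{k_1},\ldots,f_{d-1}t^{k_{d-1}}]\subseteq\R_s(\p)=:S$, the equality (\ref{goto condition}) is the multiplicity coincidence needed to deduce, via a Hilbert--function comparison and the Rees theorem applied in a ring of dimension $d+1$, that $S$ is integral and in fact module-finite over $B$. Since $B$ is a finitely generated $R$-algebra and therefore Noetherian, module-finiteness gives that $S=\R_s(\p)$ is Noetherian. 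The unmixedness hypothesis enters precisely to make the Rees theorem applicable, and this is the main obstacle in this direction: one must select the correct pair of ideals (built from $Q$, the $f_it^{k_i}$, and the symbolic filtration) whose multiplicities agree, so that the reduction conclusion transfers to the global finiteness of $S$ over $B$ in all degrees simultaneously, not merely after localizing at $\p$.

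For the necessity statement (1), where unmixedness is not assumed, I would run the construction in reverse. If $S=\R_s(\p)$ is Noetherian then some Veronese subalgebra is standard, i.e.\ there is $k>0$ with $\p^{(nk)}=(\p^{(k)})^{n}$ for all $n$, and in particular $\p$ is a set-theoretic complete intersection by the cited theorem of Cowsik, so $\p=\sqrt{(g_1,\ldots,g_{d-1})}$. Using that $R/\m$ is infinite, I would choose $f_1,\ldots,f_{d-1}$ to be sufficiently general elements of $\p^{(k_i)}$ with the $k_i$ suitable multiples of $k$: genericity makes the initial forms $f_i^{*}$ a homogeneous system of parameters of $\operatorname{gr}_{\p R_\p}(R_\p)$, yielding (a), while the stabilization $\p^{(nk)}=(\p^{(k)})^{n}$ combined with $\p$ being a set-theoretic complete intersection lets one arrange that $I$ has $\p$ as its unique one-dimensional component, yielding (b). After choosing a general $x$ completing a system of parameters, the computation above collapses to the equality (\ref{goto condition}). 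The delicate point here is (b): ruling out extraneous one-dimensional primes of $I$ genuinely uses the Noetherian hypothesis through the stabilization of symbolic powers, since for general elements of a mere power of $\p$ there is no reason for $I$ to cut out $\p$ set-theoretically up to dimension one.
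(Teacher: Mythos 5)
The paper does not actually reprove the quoted theorem of Goto--Nishida; it cites it and then proves a generalization (Theorems \ref{analytic}, \ref{main theorem} and \ref{main theorem converse}), so your attempt should be measured against those proofs. Your overall shape --- associativity formula, equality case of a multiplicity inequality, integral dependence, unmixedness entering through a Rees/B\"oger-type theorem --- is the right one, but in both directions your argument stops exactly where the real work begins.

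For the sufficiency direction (2), your plan terminates in the assertion that the multiplicity coincidence, via ``the Rees theorem applied in a ring of dimension $d+1$,'' yields module-finiteness of $\R_s(\p)$ over $R[f_1t^{k_1},\ldots,f_{d-1}t^{k_{d-1}}]$; you yourself flag this as the main obstacle and never supply it. This step is not a formal consequence of Rees's theorem. What the multiplicity equality actually buys --- and this is how the paper argues in Theorem \ref{main theorem converse} --- is that, after raising the $f_i$ to powers so that all $k_i$ equal a common $k$, the ideal $J=(f_1,\ldots,f_{d-1})$ is a reduction of $\p^{(k)}$: one shows $\minass(R/J)=\minass(R/\p^{(k)})$ and $e_{JR_\p}(R_\p)=e_{\p^{(k)}R_\p}(R_\p)$ and then invokes B\"oger's theorem (Theorem \ref{boger}), which needs quasi-unmixedness. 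This gives $\ell(\p^{(k)})=d-1$. The passage from $\ell(\p^{(k)})=\height \p^{(k)}$ to Noetherianness of the \emph{entire} symbolic algebra is the structure theorem of Goto--Herrmann--Nishida--Villamayor \cite[Theorem~3.6]{ghnv}, a substantial result (this is the second place unmixedness is used) which the paper quotes rather than reproves; your sketch compresses it into a sentence. Relatedly, your claimed ``if and only if'' between equality in (\ref{goto condition}) and the initial forms $f_1^*,\ldots,f_{d-1}^*$ being a homogeneous system of parameters of $\operatorname{gr}_{\p R_\p}(R_\p)$ is both unnecessary on the paper's route (B\"oger's theorem converts multiplicity equality into a reduction statement with no associated-graded analysis) and delicate as stated: the order inequality $e_{IR_\p}(R_\p)\geq k_1\cdots k_{d-1}\,e(R_\p)$ is standard, but its equality case is not a clean equivalence in this generality.

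For the necessity direction (1), you candidly identify the gap --- ruling out extraneous one-dimensional minimal primes of $I=(f_1,\ldots,f_{d-1})$ --- but your proposed fix (``stabilization plus set-theoretic complete intersection'') is circular: Cowsik's theorem is itself proved by producing a minimal reduction of a symbolic power, which is precisely the construction you should use directly. The paper's proof of Theorem \ref{main theorem} avoids the issue entirely: the Noetherian hypothesis gives, via Theorem \ref{analytic} (i.e.\ \cite[Theorem~3.2]{ghnv} Veronese stabilization, Burch's inequality \cite{burch}, and Lemma \ref{regular}; note $R/\p^{(n)}$ is automatically Cohen--Macaulay for a height $d-1$ prime), that $\ell(\p^{(k)})=d-1$ for a single $k$. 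One then takes $f_1,\ldots,f_{d-1}$ to be a \emph{minimal reduction of $\p^{(k)}$ itself}, so that all $k_i=k$, the radical of $(f_1,\ldots,f_{d-1})$ equals $\p$ (killing the extraneous primes for free), and the identity (\ref{goto condition}) drops out of reduction-invariance of multiplicity together with the additivity formula \cite[Theorem~14.7]{mat} --- no genericity of initial forms and no equality-case analysis is needed. Choosing general elements in possibly distinct symbolic powers, as you propose, is exactly what manufactures the step you cannot fill.
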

These  results provided  a useful way to find  Noetherian symbolic Rees algebras and to find ideals that are set-theoretic complete  intersections.

In this paper we extend the work of Huneke \cite{huneke-1987}, Morales \cite{morales-1991} and Goto and Nishida   \cite{goto} for any arbitrary   ideal $\a$ such that $1\leq \height \a \leq d-1$ (Theorem~\ref{main theorem}, Theorem~\ref{main theorem converse}). We give necessary and sufficient conditions for  the Noetherian property of $\R_s(\a)$. For a radical 
 ideal $\a$, we prove in Theorem \ref{analytic}   that if $\R_s(\a)$ is Noetherian and $R/\a^{(n)}$ is Cohen-Macaulay for large  $n$, then $\a$ is set theoretic complete intersection. 
We demonstrate our results with a few examples. In  Section~3, using our main theorem  we prove  that the symbolic Rees algebra of the  edge ideal of  complete graph is Noetherian. In Section~4, we give  a new proof for the Noetherian property of the symbolic Rees algebra of the Fermat ideal. In section 5, we prove that the symbolic Rees algebra of the Jacobian ideal the defining polynomial of certain hyperplane arrangement  is Noetherian. 

\section{The Main Result}
In this section  we prove the main result of this paper.
We  generalise a  result of  Cowsik,  Huneke, Morales and Goto to  any arbitrary  ideal $\a$ of  height $h$, where $1\leq h\leq d-1$ and give necessary and sufficient condition for the Noetherian property of $\R_s(\a)$. In order to prove the main theorem first we prove few preliminary results.  For an ideal $\a$, the associated graded ring is defined as 
${\displaystyle \Gr(\a) := \bigoplus_{n \geq 0} \a^n / \a^{n+1}}$ and for  any element $x \in \a^n\setminus \a^{n+1} $, let $x^{\star}$ denote the  image  of $x$ in 
$\a^{n}/\a^{n+1}.$

\begin{lemma}
\label{regular}  
Let $(R,\m)$ be a  Noetherian  local ring of dimension $d\geq 1$ and $\a$ be an  ideal with $\dim R/\a=s\geq 1$.
Let $R/\a^{n}$ be Cohen-Macaulay for all $n\geq 1$. 
\been
\item
\label{regular-1}
Then there exist $x_1,\ldots,  x_{s}$ such that  $x_1^{\star},\ldots ,x_{s}^{\star}$ is a $\Gr(\a)$-regular sequence
and 
$x_1,\ldots ,x_{s}$ is an $R$-regular sequence and

\item
\label{regular-2}
$\dim R/\a =\dim R-\height \a $.
\eeen
\end{lemma}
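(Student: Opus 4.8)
The plan is to push everything through the associated graded ring by first showing that each graded piece of $\Gr(\a)$ is a maximal Cohen--Macaulay module over $R/\a$. Applying the depth lemma to the short exact sequences
\[
0 \lrar \a^n/\a^{n+1} \lrar R/\a^{n+1} \lrar R/\a^n \lrar 0,
\]
and using that $\a^n$ and $\a^{n+1}$ have the same radical as $\a$ (so $R/\a^{n+1}$ and $R/\a^n$ are Cohen--Macaulay of dimension $s$, hence of depth $s$), I obtain $\depth(\a^n/\a^{n+1}) \geq \min(s,s+1)=s$. Since $\a^n/\a^{n+1}$ is a module over $R/\a$, its dimension is at most $s$; thus every $\a^n/\a^{n+1}$ is a maximal Cohen--Macaulay $R/\a$-module of dimension $s$.

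Next I would construct the sequence. Choose $x_1,\ldots,x_s\in R$ whose residues form a system of parameters of the $s$-dimensional Cohen--Macaulay local ring $R/\a$; being a system of parameters of a Cohen--Macaulay ring, these residues are an $R/\a$-regular sequence, and they are also a regular sequence on every maximal Cohen--Macaulay $R/\a$-module of dimension $s$, in particular on each $\a^n/\a^{n+1}$. Each $x_i\notin\a$, so its leading form $x_i^\star$ is the residue $\ov{x_i}\in R/\a=\Gr(\a)_0$ and multiplication by it preserves degrees; since these degree-zero operators act componentwise on $\Gr(\a)=\bigoplus_n \a^n/\a^{n+1}$, regularity on each component is exactly regularity on $\Gr(\a)$. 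Hence $x_1^\star,\ldots,x_s^\star$ is a $\Gr(\a)$-regular sequence. To return to $R$ I would use the standard transfer of regularity: since $\bigcap_n\a^n=0$, a leading-form computation shows directly that $x_1$ is a nonzerodivisor on $R$, and the isomorphism $\Gr(\a)/x_1^\star\Gr(\a)\cong\Gr(\ov\a)$ for the image $\ov\a$ of $\a$ in $R/x_1R$ (valid because $x_1^\star$ is a nonzerodivisor) lets me induct on $s$ to conclude that $x_1,\ldots,x_s$ is an $R$-regular sequence. This settles (\ref{regular-1}).

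For (\ref{regular-2}) the inequality $\height\a\leq\dim R-\dim R/\a=d-s$ holds for every ideal, so the content is the reverse inequality $\height\a\geq d-s$, equivalently the dimension formula for $\a$. Using the sequence above, $\a+(x_1,\ldots,x_s)$ is $\m$-primary and $R/(x_1,\ldots,x_s)$ has dimension $d-s$ with the image of $\a$ primary to its maximal ideal. The main obstacle is that this alone does not pin down $\height_R\a$: in a general local ring the height of $\a+(x_1,\ldots,x_s)$ can exceed $\height\a+s$ even for a regular sequence $x_1,\ldots,x_s$, so the reverse inequality genuinely requires the Cohen--Macaulay hypothesis on all powers. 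The route I would take is to note, from the first step, that each $\a^n$ is unmixed with every associated prime $\p$ satisfying $\dim R/\p=s$, and to use this (together with catenarity of $R$) to show that every minimal prime $\p$ of $\a$ contains a minimal prime $\mathfrak P$ of $R$ with $\dim R/\mathfrak P=d$; the dimension formula in $R$ then yields $\height\p=d-s$ for each such $\p$, hence $\height\a=d-s$. I expect establishing this last equidimensionality-type statement to be the delicate part of the argument.
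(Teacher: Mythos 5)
Your part (\ref{regular-1}) is correct and is essentially the paper's own proof in different packaging: the paper inducts on $s$, choosing $x_1$ by prime avoidance outside $\Ass(R/\a^n)$ (which equals the set of minimal primes of $\a$ by the Cohen--Macaulay hypothesis), deduces regularity on each $\a^n/\a^{n+1}$ from the same short exact sequence, transfers regularity to $R$ by the same leading-form computation, and passes to $\ov R=R/(x_1)$ via $\Gr(\a)/x_1^{\star}\Gr(\a)\cong \Gr(\a\ov R)$; your reformulation (each $\a^n/\a^{n+1}$ is a maximal Cohen--Macaulay $R/\a$-module, then take a system of parameters of $R/\a$) uses the same ingredients and is fine. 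Part (\ref{regular-2}) is where your proposal has a genuine gap, and you flag it yourself: the ``equidimensionality-type statement'' (every minimal prime of $\a$ contains a minimal prime of $R$ of coheight $d$) is exactly what is needed and you do not prove it; moreover you invoke catenarity of $R$, which is not among the hypotheses (that particular defect could be repaired by passing to $\hat R$, since the hypotheses are preserved by completion and complete local rings are catenary). So as written you have not proved (\ref{regular-2}).

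However, your refusal to accept the quick regular-sequence argument is precisely right, and in fact the gap cannot be filled: statement (\ref{regular-2}) is false at this level of generality. Let $A=k[[x,y,u,v,w]]$, $R=A/\bigl((x,y)\cap (u,v,w)\bigr)$, and $\a=(x,u,v)R$. Then $d=3$ and $R/\a\cong k[[y,w]]/(yw)$, so $s=1$. For every $n\geq 1$ one has $R/\a^{n}\cong A/J_n$ with $J_n=(x,u,v)^{n}+(x,y)(u,v,w)$, which is one-dimensional with $\Ass(A/J_n)=\{(x,u,v,w),\,(x,y,u,v)\}$ (a direct monomial check: no monomial $m\notin J_n$ satisfies $(J_n:m)=(x,y,u,v,w)$), hence Cohen--Macaulay; yet $\height(\a)=\height\bigl((x,u,v,w)R\bigr)=1\neq 2=d-s$, because the only prime of $R$ properly contained in $(x,u,v,w)R$ is $(u,v,w)R$. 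This example also shows where the paper's own proof of (\ref{regular-2}) breaks: that proof is the chain $\height(\a)\geq \height(\ov\a)=\dim R/(\underline{x})=d-s$, with the first inequality justified only by ``$\underline{x}$ is an $R$-regular sequence''---the very step you balked at---and here it fails, since $\height(\ov\a)=2>1=\height(\a)$. The statement, your route, and the paper's argument all become correct under the additional hypothesis that $R$ is equidimensional and catenary (e.g.\ quasi-unmixed, as in Theorem~\ref{main theorem converse}, and as holds in all of the paper's applications, where $R$ is a localized polynomial ring): then $\height\p+\dim R/\p=\dim R$ for every prime $\p$, so (\ref{regular-2}) holds for every ideal $\a$ with no assumption on the powers at all.
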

\begin{proof}
(\ref{regular-1}) We prove (a)   by induction on $\dim (R/\a)=s$. First assume that $\dim (R/\a)=1$, then we show that there exists $x_1 \in R$ such that $x_1^{\star}$ is  a $\Gr(\a)$-regular element  and $x_1$ is  an $R$- regular element. 
As $R/\a^n$ is Cohen-Macaulay for all $n\geq 1,$ $\Ass (R/\a^n)=
\{P_1,\ldots ,P_t\}$ where $P_1, P_2,\ldots, P_t$ are the minimal primes of $\a.$

By prime avoidance lemma choose $x\in \m\setminus \displaystyle{\cup_{i=1}^t P_i}$. Then $x$ is $R/\a^{n}$-regular for all $n\geq 1$. To show $x$ is $\Gr(\a)$-regular, it is enough to show that $x$ is regular on  $\a^n/ \a^{n+1}$ for all $n \geq 1$.   From the exact sequence
\begin{equation*}
0\longrightarrow \a^{n}/\a^{n+1}\longrightarrow R/\a^{n+1}\longrightarrow R/\a^{n}\longrightarrow 0 \hspace{.2in}  (n \geq 1),
\end{equation*}
 $x$ is $\a^{n}/\a^{n+1}$-regular for all $n \geq 0$ and hence $x^*$ is $\Gr(\a)$-regular.
We  now show that $x$ is $R$-regular.  There exists $n\geq 0$ so that  $x\in \a^n\setminus\a^{n+1}.$ If there is a $y\in R$ such that  $xy=0$ and $y\neq 0$ then  $y\in \a^m\setminus \a^{m+1} $ for some  $m.$ As $(xy)^* =x^*y^*=0,$ $y^*=0$ which means $y\in \a^{m+1}$ which is a contradiction. Hence $x$ is $R$-regular.
Now assume that 
$\dim R/\a >1$.  
 Using the above argument  choose $x_1$ so that $x_1^{\star}$ is  $\Gr(\a)$-regular and $x_1$ is  $R$-regular.  As $x_1$ is $R/\a^n$- regular for all $n\geq 1$, we have $(x_1)\cap \a^n=x_1 \a^n$ for all $n\geq 1$ which implies that $\Gr(\a\ov R)\cong \Gr(\a)/x_1^* \Gr(\a)$, where $\overline{R}=R/(x_1)$. Since $x_1$ is $R/ \a^n$-regular for all $n \geq 1$, we have $  \overline {R}/  \overline{\a^n}$ is Cohen-Macaulay of dimension $s-1$ for all $n \geq 1$.  By  induction hypothesis,  we can choose $x_2,\ldots, x_{s} \in R$ such that $\ov{x_2}^*,\ldots,  \ov{x_{s}}^*$  is $\Gr(\a\ov R)\cong \Gr(\a)/x_1^* \Gr(\a)$-regular sequence and   $\ov{x_2},\ldots, \ov{x_{s}}$ is $\ov{R}$-regular sequence. Thus  $x_1^*,\ldots ,x_{s}^*$ is a $\Gr(\a)$-regular sequence and 
$x_1,\ldots ,x_{s}$ is a an $R$-regular sequence. This proves (\ref{regular-1}).
 
(\ref{regular-2}) By part (1), since there exists an $R/\a$-regular sequence  $ \underline {x} = x_1,  \ldots, x_s,$ $\dim (   R/ \a + (\underline{x})) =0.$ Hence
 $\overline{\a}$ is an $\m/(\underline{x})$-primary ideal in $\ov{R}=R/ (\underline{x})$.  Since $\underline{x}$ is an $R$- regular sequence,
 $$        \height(\a) 
 \geq \height(\overline{\a}) 
 = \dim (R/ (\underline{x}) )
 = \dim(R) -\dim R/\a =\height(\a).$$
 Therefore  $s=\dim R/\a=\dim R-\height \a$.
\end{proof}

\begin{example} Lemma~\ref{regular} need not hold true if we drop the assumption that $R/\a^n$ is Cohen-Macaulay for all $n \geq 1$.
Let $R= k[[x,y,z]] /(xy,xz)= k[[\overline{x}, \overline{y}, \overline{z}]]$ and $I = (\overline{x})$. Then $R/I$ is Cohen-Macaulay, but $R/I^n$ is not Cohen-Macaulay for all $n \geq 2$.  Since $\dim(R/I) =2$ and  $\depth(R) =1,$ we cannot find a regular sequence of length two in $R$. \qed
\end{example}
 In \cite[Theorem~3.3]{ghnv}, Goto et. al proved  that if $R$ is an unmixed local ring and $\a$ an ideal such that $\ell(\a^{(k)}) = \height(\a^{(k)})$,  for some $k \geq 1$, then $\R_s(\a)$  is Noetherian where  $\ell(\a)$ denote the analytic spread of  $\a$.  With the  additional assumption that $R/\a^{(n)}$ is Cohen-Macaulay for all $n \gg 0$, they proved the converse \cite[Theorem 3.6]{ghnv}.   In the next lemma we give an alternate proof of the converse.
 
\begin{theorem}\label{analytic}
Let $(R,\m)$ be a local ring of dimension $d,$ and $R/\m$ is infinite. Let $\a$ be an ideal of positive height $h$. If   $R/\a^{(n)}$ is Cohen-Macaulay for all  $n \gg 0$ and  $\R_s(\a)$ is Noetherian, then  $\ell(\a^{(k)})=\height(\a^{(k)})$ for some $k\geq 1$ and $\a$ is a set-theoretic complete intersection.
\end{theorem}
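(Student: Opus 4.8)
The plan is to replace $\a$ by an ordinary ideal whose \emph{every} power is Cohen--Macaulay, and then to pin down the analytic spread using Burch's inequality together with part~(2) of Lemma~\ref{regular}. Since $\R_s(\a)$ is Noetherian, some Veronese subalgebra of $\R_s(\a)$ is standard graded; equivalently there is an integer $k\geq 1$ with $\a^{(nk)}=(\a^{(k)})^{n}$ for all $n\geq 1$. Writing $I=\a^{(k)}$ we then have $I^{n}=\a^{(nk)}$ and $\sqrt{I}=\sqrt{\a}$, so $\height I=\height\a=h$. Because $R/\a^{(m)}$ is Cohen--Macaulay for $m\geq N$ say, I would choose $m$ with $mk\geq N$ and set $J=I^{m}=\a^{(mk)}$; then $R/J^{j}=R/\a^{(mkj)}$ is Cohen--Macaulay for \emph{every} $j\geq 1$, since $mkj\geq mk\geq N$. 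Note $\sqrt{J}=\sqrt{\a}$ and $\height J=h$, and recall that the analytic spread is unchanged under passing to powers, so $\ell(J)=\ell(I^{m})=\ell(I)=\ell(\a^{(k)})$.

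The heart of the matter is the equality $\ell(J)=h$. Applying part~(2) of Lemma~\ref{regular} to $J$ (all of whose Rees powers $R/J^{j}$ are Cohen--Macaulay) gives $\dim R/J=\dim R-\height J=d-h$, the case $h=d$ in which $J$ is $\m$-primary being trivial. Since each $R/J^{j}$ is Cohen--Macaulay, $\depth(R/J^{j})=\dim(R/J^{j})=\dim R/J=d-h$ for all $j\geq 1$, so $\min_{j\geq 1}\depth(R/J^{j})=d-h$. Burch's inequality $\ell(J)\leq \dim R-\min_{j\geq 1}\depth(R/J^{j})$ then yields $\ell(J)\leq d-(d-h)=h$, while the general bound $\ell(J)\geq\height J=h$ gives the reverse inequality. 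Hence $\ell(\a^{(k)})=\ell(J)=h=\height(\a^{(k)})$, which is the first assertion.

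For the set-theoretic complete intersection statement I would invoke the theory of minimal reductions. As $R/\m$ is infinite and $\ell(J)=h$, the ideal $J$ admits a minimal reduction $(f_{1},\ldots,f_{h})$ generated by $h=\ell(J)$ elements, and a reduction has the same radical as the ideal it reduces, so
\[
\sqrt{(f_{1},\ldots,f_{h})}=\sqrt{J}=\sqrt{\a^{(mk)}}=\sqrt{\a}.
\]
Since $\height\a=h$, this exhibits $\a$ as a set-theoretic complete intersection.

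The main obstacle is the passage to an ideal all of whose ordinary powers are Cohen--Macaulay: Burch's inequality only becomes useful once $\min_{n}\depth(R/I^{n})$ is controlled, and it is precisely the Veronese replacement $J=I^{m}$ that converts the hypothesis ``$n\gg 0$'' into ``all $n$'', allowing Lemma~\ref{regular} to be applied. One must also be careful that $\ell$ is invariant under taking powers, so that the spread computed for $J$ genuinely returns information about $\a^{(k)}$, and that heights and radicals are preserved along the chain $\a\leadsto\a^{(k)}\leadsto J$.
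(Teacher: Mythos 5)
Your proposal is correct and follows essentially the same route as the paper's own proof: both use the Veronese property of the Noetherian symbolic Rees algebra (\cite[Theorem~3.2]{ghnv}) to pass to an ideal $I=\a^{(k)}$ whose ordinary powers all have Cohen--Macaulay quotients, then apply Lemma~\ref{regular}(2) and Burch's inequality to obtain $\ell(\a^{(k)})=\height(\a^{(k)})$, and finally take a minimal reduction (possible since $R/\m$ is infinite) whose radical equals $\sqrt{\a}$. Your intermediate replacement $J=I^{m}$ is just a more careful bookkeeping of the paper's step ``choose $k \geq \{m, n_0\}$,'' which implicitly requires $k$ to be a multiple of the Veronese degree.
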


\begin{proof}
Since $\R_s(\a)$  is Noetherian,  by \cite[Theorem~3.2]{ghnv} there exists  $m \geq 1$ such that $\a^{(m)r}=\a^{(mr)}$ for all $r \geq 1$. By our assumption,  there exists an $n_0$ such that for all $n \geq n_0$, $R/\a^{(n)}$ is Cohen-Macaulay. Choose $k \geq \{m, n_0 \}$ such that both the assumptions are satisfied. Put $I = \a^{(k)}$. Then 
 for all $n \geq 1$, $R/I^n$ is Cohen-Macaulay. 
Hence by Lemma \ref{regular}(\ref{regular-2}),  $\dim R/I=\dim R-\height I$. Since $R/I^n$ is Cohen-Macaulay we have $\depth (R/I^n)=\dim R/I^n=d- \height(I)$.  By Burch's inequality \cite{burch},  we get 
 $$
 \ell(I) \leq d - \inf_n \depth (R/I^n)=\height(I).
 $$
  As  $\height(I) \leq  \ell(I)$, equality holds which implies that  $\ell(\a^{(k)})=\height(\a^{(k)})$.
 Since $R/\m$ is infinite, there is a  minimal reduction $J=(f_1,\ldots ,f_h)$ of $\a^{(k)}$.  Hence $\sqrt{\a} =\sqrt{\a^{(k)}}=\sqrt{J}$.
\end{proof}

The rest of this section is dedicated for a generalization of results in \cite{huneke-1987} and \cite{morales-1991} and \cite{goto}.  We first prove some preliminary results.

\begin{theorem}
\label{main theorem}
Let $(R,\m)$ be a 
a local ring of dimension $d\geq 1$ with infinite residue field $k.$   Let $\a$ be an ideal of height $h$ with $1 \leq h \leq d-1$
 such that $R/\a^{(m)}$ is Cohen-Macaulay for large $m$ and  $\R_s(\a)$ is Noetherian. Then there exist 
 $x_1,\ldots, x_{d-h},$ a system of parameters for $R/\a,$ and $f_i\in {\a}^{(k_i)}$ for $i=1,2,\ldots, h$ such that 
\begin{enumerate}
\item
\label{main theorem one}
$x_1,\ldots ,x_{d-h}$ is a regular sequence in $R$.
\item 
\label{main theorem two}
$x_1,\ldots ,x_{d-h},f_1,\ldots ,f_h$ is a system of parameters for $R$. 
\item
\label{main theorem three}
Let  $\minass (R/\a):=\{P_1,\ldots ,P_s\}$. Then 
\beqn
e_{(x_1,\ldots ,x_{d-h},f_1,\ldots ,f_h)}(R)=\left ( \displaystyle{\prod_{j=1}^{h}k_j}\right )\displaystyle{\sum_{i=1}^se_{ \a R_{P_i}}(R_{P_i})e_{(x_1,\ldots ,x_{d-h})}(R/P_i)}. 
\eeqn
\end{enumerate}
 \end{theorem}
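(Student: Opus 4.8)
The plan is to collapse the whole statement onto a single symbolic power $I:=\a^{(k)}$ chosen so that $R/I^{n}$ is Cohen--Macaulay for every $n\geq 1$, and then to read the three assertions off Lemma~\ref{regular}, Theorem~\ref{analytic}, and the associativity formula for multiplicities. First I would argue exactly as at the start of the proof of Theorem~\ref{analytic}: since $\R_s(\a)$ is Noetherian, \cite[Theorem~3.2]{ghnv} produces an $m$ with $\a^{(m)r}=\a^{(mr)}$ for all $r\geq 1$, and choosing $k$ to be a large multiple of $m$ that also exceeds the threshold $n_{0}$ coming from the Cohen--Macaulay hypothesis makes $R/I^{n}$ Cohen--Macaulay for all $n\geq 1$, where $I=\a^{(k)}$. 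Since $\sqrt{I}=\sqrt{\a}$ we have $\height I=h$, $\minass(R/I)=\minass(R/\a)=\{P_1,\ldots,P_s\}$, and $\dim R/I=\dim R/\a=d-h$ by Lemma~\ref{regular}(\ref{regular-2}); being Cohen--Macaulay, $R/I$ is equidimensional, so $\dim R/P_i=d-h$ for every $i$.

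For assertion (\ref{main theorem one}) and the system of parameters I would apply Lemma~\ref{regular}(\ref{regular-1}) to $I$ to obtain $x_1,\ldots,x_{d-h}$ whose initial forms $x_1^{\star},\ldots,x_{d-h}^{\star}$ are a $\Gr(I)$--regular sequence and which form an $R$--regular sequence and (by the prime-avoidance choice used in the proof of Lemma~\ref{regular}) a system of parameters for $R/I$; this is (\ref{main theorem one}), and $(x_1,\ldots,x_{d-h})+I$ is then $\m$--primary, so $x_1,\ldots,x_{d-h}$ is a system of parameters for $R/\a$ as well. For the elements $f_i$, Theorem~\ref{analytic} gives $\ell(I)=\height I=h$, so, the residue field being infinite, $I$ has a minimal reduction $J=(f_1,\ldots,f_h)$ with $f_i\in I=\a^{(k)}$; thus I may take $k_i=k$ for all $i$, whence $\prod_{j=1}^{h}k_j=k^{h}$. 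Because $\sqrt{J}=\sqrt{I}=\sqrt{\a}$ and $(x_1,\ldots,x_{d-h})+\a$ is $\m$--primary, the ideal $(x_1,\ldots,x_{d-h},f_1,\ldots,f_h)$ is $\m$--primary; being generated by $d$ elements it is a system of parameters for $R$, which is (\ref{main theorem two}). Moreover each $P_i$ is now minimal over the $h$--generated ideal $J$, so Krull's height theorem together with $\height\a=h$ forces $\height P_i=h$, i.e. $\dim R_{P_i}=h$.

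The substance is assertion (\ref{main theorem three}), and here the engine is the associativity formula for the multiplicity symbol applied to the parameter system $x_1,\ldots,x_{d-h},f_1,\ldots,f_h$ split into the two blocks $\underline{x}=x_1,\ldots,x_{d-h}$ and $\underline{f}=f_1,\ldots,f_h$:
\begin{equation*}
e_{(\underline{x},\underline{f})}(R)=\sum_{\p}e_{(\underline{f})R_{\p}}(R_{\p})\,e_{(\underline{x})}(R/\p),
\end{equation*}
the sum taken over primes $\p$ with $\dim R/\p=d-h$ for which $\underline{x}$ is a system of parameters of $R/\p$ and $(\underline{f})R_{\p}$ is $\p R_{\p}$--primary. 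A prime contributes exactly when it is minimal over $(\underline{f})=J$ of coheight $d-h$, and since $\sqrt{J}=\sqrt{\a}$ these are precisely $P_1,\ldots,P_s$. Finally, localizing at the minimal prime $P_i$ gives $IR_{P_i}=(\a R_{P_i})^{k}$, while $JR_{P_i}$ is a reduction of $IR_{P_i}$, so that
\begin{equation*}
e_{(\underline{f})R_{P_i}}(R_{P_i})=e_{IR_{P_i}}(R_{P_i})=k^{h}\,e_{\a R_{P_i}}(R_{P_i}).
\end{equation*}
Substituting and using $\prod_{j=1}^{h}k_j=k^{h}$ gives the claimed identity.

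I expect the only real obstacle to be assertion (\ref{main theorem three}): one must invoke the associativity formula in precisely the block form above, verify that the multiplicity symbols in the two blocks coincide with the genuine Hilbert--Samuel multiplicities $e_{(\underline{f})R_{P_i}}(R_{P_i})$ and $e_{(\underline{x})}(R/P_i)$, and --- crucially --- identify the contributing primes with $\minass(R/\a)$, which is where $\sqrt{J}=\sqrt{\a}$ and the equidimensionality coming from the Cohen--Macaulayness of $R/I$ enter (the former to match the minimal primes, the latter to guarantee every $P_i$ has the correct coheight $d-h$ so that no spurious terms or omissions occur). A more self-contained route to the same identity, which avoids quoting the block associativity formula directly, is to first pass to $\overline{R}=R/(\underline{x})$: since the $x_j^{\star}$ form a $\Gr(I)$--regular sequence one gets $e_{(\underline{x},\underline{f})}(R)=e_{\overline{J}}(\overline{R})$, the ideal $\overline{I}$ is $\overline{\m}$--primary with $\Gr(\overline{I})\cong\Gr(I)/(x_1^{\star},\ldots,x_{d-h}^{\star})\Gr(I)$, and one then applies the ordinary associativity formula inside the $h$--dimensional ring $\overline{R}$.
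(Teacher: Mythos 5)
Your proof is correct, and parts (\ref{main theorem one}) and (\ref{main theorem two}) follow the paper's own argument almost verbatim: Lemma~\ref{regular} applied to $I=\a^{(k)}$, the analytic spread computation of Theorem~\ref{analytic}, a minimal reduction $J=(f_1,\ldots,f_h)$ of $I$, and the $\m$-primariness of $(\underline{x})+J$. (In fact your choice of $k$ as a large multiple of the $m$ from \cite[Theorem~3.2]{ghnv} is more careful than the paper's $k=\max\{r,n_0\}$, which as written does not guarantee $I^n=\a^{(kn)}$ and hence does not obviously give Cohen--Macaulayness of $R/I^n$ for all $n$.) The genuine difference is in part (\ref{main theorem three}). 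The paper passes to $B=R/(\underline{x})$ (using the $R$-regularity of $\underline{x}$, cf.\ \cite[Theorem~14.11]{mat}), writes $e_{(\underline{x},J)}(R)=e_J(B)=e_I(B)=\lim_n \frac{h!}{n^h}\lambda_R\bigl(R/(\underline{x})+I^n\bigr)$, converts each colength into the multiplicity $e_{\underline{x}}(R/I^n)$ using the Cohen--Macaulayness of $R/I^n$ at every single power $n$, expands via the additivity formula \cite[Theorem~14.7]{mat}, and then takes the limit after localizing at the $P_i$. You instead invoke, in one stroke, the classical block (chain) associativity law for a system of parameters split as $(\underline{f};\underline{x})$ --- this is Nagata's associativity formula \cite{nagata}, not \cite[Theorem~14.7]{mat}, which is only the additivity formula the paper uses, so you should cite it precisely --- and finish with the localization identities $e_{JR_{P_i}}(R_{P_i})=e_{IR_{P_i}}(R_{P_i})=k^h e_{\a R_{P_i}}(R_{P_i})$. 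Both routes are valid: yours trades the paper's elementary but laborious limit computation for a stronger classical theorem, needs no passage to $B$ at all, and isolates the role of the Cohen--Macaulay hypothesis in (\ref{main theorem three}) to equidimensionality, i.e.\ to guaranteeing that every $P_i$ has $\dim R/P_i=d-h$ (whence $\height P_i=h$) and so genuinely occurs in the associativity sum --- a point you correctly flag as the crucial one. One caveat: your closing ``self-contained'' alternative (apply the ordinary additivity formula inside $\ov{R}=R/(\underline{x})$) is under-specified as stated, since the minimal primes of $\ov{R}$ are not the images of the $P_i$; making it precise essentially forces you back into the paper's computation. This does not affect your main argument.
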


\begin{proof}
(\ref{main theorem one}) As $\R_s(\a)$ is  Noetherian,  by Theorem \ref{analytic} there exists  $r\geq 1$ such that $\ell( \a^{(r)})=h$.   By our assumption there exists $n_0$ such that $R/ \a^{(n)}$ is Cohen-Macaulay for all $n \geq n_0$. Choose $k =\max\{r,n_0 \}$ and put $I= \a^{(k)}$. Then by the proof of  Lemma~\ref{regular},  we can choose an $R$-regular sequence  $x_1, \ldots ,x_{d-h}\in \m$.

(\ref{main theorem two}) As  $k$ is infinite and $\ell(I) = h, $ we can choose a minimal reduction $J=(f_1,\ldots ,f_{h})R$  of $I$.  Since $\dim R/I=d-h$, by Lemma~\ref{regular},  we can choose  $x_1,\ldots ,x_{d-h}\in \m$  whose images in $R/I$ form a system of parameters for $R/I$.  As  $J$ is a minimal reduction of $I$ we have $\minass(R/I)=\minass (R/J)$, which implies that  $x_1,\ldots ,x_{d-h}$ is  also a system of parameters for $R/J$. Thus $x_1,\ldots ,x_{d-h}, f_1,\ldots, f_h$ is a system of parameters of $R$.
 
 (\ref{main theorem three}) Put $B=R/(\underline{x})$ where $\underline{x} = (x_1, \ldots, x_{d-h})$. Then $JB$ is a  reduction of $IB$. 
 Since $R/I^{n}$ Cohen-Macaulay and $\underline{x}$ is a system of parameters in $R/I^n$ we have 
 \beq
\label{eq3}\nonumber
         e_{({\underline{x}, J)}}(R)=e_J(B)
 &=& e_I(B) \hspace*{5cm} \mbox{ (as $J$ is a reduction of $I$)}\\
&=&        {\limm}  \left[\f{h!}{n^{h}}\lm_B \left( \f{B}{I^{n}B} \right)\right]\\ \nonumber
&=&    {\limm} \left[ \f{h!}{n^{h}}   
 \lm_R \left( \f{R}{(\underline{x})+I^{n}} \right) \right]\\\nonumber
&=&     {\limm}  \f{h!}{n^{h}}     
\left[ e_{\underline{x}R} \left (\f{R}{I^{n}} \right) \right]\\ \nonumber
&=&   {\limm}  \f{h!}{n^{h}}     \left[\displaystyle{\sum_{i=1}^s
        \lm_{R_{P_i}}\left(\f{R_{P_i}}{I^{n}R_{P_i}}\right)
        e_{\underline{x}}\left(\f{R}{P_i}\right)}\right]
        \hspace{.2in} \mbox{(by \cite[Theorem 14.7]{mat})}\\  \nonumber
&=&  {\limm}  \f{k^hh!}{(kn)^{h}}    
\left[ \sum_{i=1}^s \lm_{R_{P_i}} \left(\f{R_{P_i}}{{\a}^{kn}} \right)
          e_{\underline{x}} \left(\f{R}{P_i} \right) \right] \hspace{.2in} 
          \mbox{[since ${\a}^{(kn)}R_{P_j} = {\a}^{kn}R_{P_j}$]}\\
 &=&  k^h \left[ \displaystyle{\sum_{i=1}^s e_{{\a}R_{P_i}}(R_{P_i})
                                     e_{\underline{x}}(R/P_i)}   \right]      .
\eeq
 \end{proof}

 In order to prove the converse of the above theorem we need the following result proved by E. B\"oger.
 An ideal $I$ is said to be equimultiple ideal if $\height I=\ell(I)$.

\begin{theorem}[\cite{B\"oger}] \label{boger}
Let $J$ be an equimultiple ideal of a quasi-unmixed local ring $(R,\frak m)$. Let $I \supseteq J$ be an ideal. Then $J$ is a reduction of $I$ if and only if $e(I_{p}) = e(J_{p})$ for every minimal prime $p$ of $I$ and $\sqrt{I}=\sqrt{J}.$ 
\end{theorem}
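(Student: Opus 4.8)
This is Rees's multiplicity theorem promoted from $\m$-primary to equimultiple ideals; my plan is to dispatch the forward implication by localization and to derive the converse from Rees's $\m$-primary criterion by cutting $R$ down to dimension $h$. \emph{Forward implication.} Suppose $J$ is a reduction of $I$, so that $I^{n+1}=JI^{n}$ for some $n\geq 0$. Then $I^{n+1}\subseteq J$, whence $\sqrt{I}\subseteq\sqrt{J}$, and together with $J\subseteq I$ this gives $\sqrt{I}=\sqrt{J}$. If $p$ is a minimal prime of $I$, localizing $I^{n+1}=JI^{n}$ at $p$ shows $J_{p}$ is a reduction of $I_{p}$; since $p$ is minimal over $I=\sqrt{I}=\sqrt{J}$, both $I_{p}$ and $J_{p}$ are $pR_{p}$-primary, and a reduction of an ideal primary to the maximal ideal has the same Hilbert--Samuel multiplicity. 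Hence $e(I_{p})=e(J_{p})$, and this direction needs nothing beyond the definitions.

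\emph{Converse: reduction to the $\m$-primary case.} The main input here is Rees's classical theorem, namely that in a quasi-unmixed local ring two $\m$-primary ideals $J'\subseteq I'$ satisfy ``$J'$ is a reduction of $I'$'' if and only if $e(J')=e(I')$. Since $J$ is equimultiple, $\height J=\ell(J)=h$, and quasi-unmixedness forces $\dim R/J=d-h$; as $\sqrt{I}=\sqrt{J}$ the same holds for $I$. After the harmless faithfully flat extension $R\to R[X]_{\m[X]}$ I may assume the residue field infinite, so a sequence $\underline x=x_{1},\dots,x_{d-h}\in\m$ of sufficiently general elements is simultaneously a system of parameters for $R/I=R/J$ and a superficial sequence for $I$. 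Set $\ov R=R/(\underline x)$; it remains quasi-unmixed, being the quotient of a formally equidimensional ring by part of a system of parameters, and $\ov I,\ov J$ are $\ov\m$-primary.

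\emph{Matching multiplicities and applying Rees.} Exactly the additivity computation used in the proof of Theorem~\ref{main theorem} (via \cite[Theorem~14.7]{mat}) gives
\[
e(\ov I)=e_{I}(\ov R)=\sum_{p}e(I_{p})\,e_{\underline x}(R/p),
\]
the sum ranging over the minimal primes $p$ of $I$ with $\dim R/p=d-h$, the lower-dimensional minimal primes contributing $e_{\underline x}(R/p)=0$; the identical formula holds with $I$ replaced by $J$. Since $\sqrt{I}=\sqrt{J}$ the index set is common to both, and for each such $p$ the hypothesis gives $e(I_{p})=e(J_{p})$, so $e(\ov I)=e(\ov J)$. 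By Rees's theorem applied in $\ov R$, the $\ov\m$-primary ideal $\ov J$ is a reduction of $\ov I$.

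\emph{Descending the reduction and the main obstacle.} It remains to promote ``$\ov J$ is a reduction of $\ov I$'' back to ``$J$ is a reduction of $I$'' in $R$, and this lifting is the crux of B\"oger's refinement of Rees. It is precisely here that equimultiplicity is indispensable: because $\ell(J)=h$ and $\underline x$ is superficial for $I$, passing modulo $\underline x$ neither creates nor destroys integral dependence of $I$ on $J$, so from $\ov I^{\,n+1}=\ov J\,\ov I^{\,n}$ one recovers $I^{n+1}=JI^{n}+\big((\underline x)\cap I^{n+1}\big)$ and removes the $(\underline x)$ term by an Artin--Rees/Nakayama argument exploiting superficiality. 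I expect the two genuinely delicate points to be (i) verifying that $\ov R$ stays quasi-unmixed so that Rees's theorem is available, and (ii) the descent in the last step, which requires controlling the analytic spread and the superficiality of $\underline x$ with respect to \emph{both} $I$ and $J$ at once; the equimultiple hypothesis on $J$ is exactly what makes (ii) go through.
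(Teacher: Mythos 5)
First, a point of comparison: the paper does not prove this theorem at all --- it is quoted from \cite{B\"oger} and used as a black box in the proof of Theorem~\ref{main theorem converse} --- so your argument has to stand on its own. Your forward implication is correct, but the converse has two gaps, the second fatal as written. For the multiplicity matching, set $B=R/(\underline x)$. The equality $e(IB;B)=\sum_p e(I_p)\,e_{\underline x}(R/p)$ is \emph{not} ``exactly the additivity computation used in the proof of Theorem~\ref{main theorem}'': there, the length $\lambda(R/((\underline x)+I^n))$ is converted into the multiplicity $e_{\underline x R}(R/I^n)$ using the hypothesis that $R/I^n$ is Cohen--Macaulay, which you do not have here. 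In general one only gets length $\geq$ multiplicity, hence $e(IB;B)\geq \sum_p e(I_p)\,e_{\underline x}(R/p)$, and likewise for $J$; two bounds in the same direction do not yield $e(IB;B)=e(JB;B)$. Closing this would require the reverse inequality for the equimultiple ideal $J$, which is itself a nontrivial theorem about equimultiple ideals (essentially the associativity formula of Herrmann--Ikeda--Orbanz), not a consequence of \cite[Theorem~14.7]{mat}.

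The fatal gap is the descent, which you flag as the crux and then dispatch with an Artin--Rees/Nakayama sketch. The principle you appeal to --- equimultiplicity of $J$ plus generality of $\underline x$ makes ``$JB$ is a reduction of $IB$'' equivalent to ``$J$ is a reduction of $I$'' --- is false. Take $R=k[[x,y]]$ (quasi-unmixed), $J=(y^2)$, $I=(y^2,xy)$. Then $\sqrt J=\sqrt I=(y)$, $J$ is equimultiple ($\height J=\ell(J)=1$), and modulo any general parameter $x_1=x+\lambda y$ of $R/J$ the images of $J$ and $I$ in $k[[y]]$ are both $(y^2)$, so the cut-down reduction statement holds trivially; yet $J$ is not a reduction of $I$, because $(y^2)$ is principal, hence integrally closed, and $xy\notin(y^2)$. (Consistently with B\"oger, the multiplicity hypothesis fails here, $e(I_{(y)})=1\neq 2=e(J_{(y)})$; the same example also refutes your additivity equality, since $e(IB;B)=2\neq 1=\sum_p e(I_p)e_{x_1}(R/p)$.) Mechanically, your sketch breaks because the $x_i$ do not lie in $I$ --- so ``superficial sequence for $I$'' has no meaning for them, and Nakayama cannot absorb the Artin--Rees error term $(\underline x)I^{n+1-c}$ into $JI^n$. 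Any correct proof must re-use the local multiplicity equalities in the final step, and the standard argument (Swanson--Huneke, \emph{Integral Closure of Ideals, Rings, and Modules}, Theorem~11.3.2) does so without cutting down at all: localize at each minimal prime $p$ of $I$ (localizations of quasi-unmixed rings are quasi-unmixed) and apply Rees's theorem in $R_p$ to conclude that $I_p$ lies in the integral closure of $J_p$; then invoke the Ratliff--McAdam theorem that in a quasi-unmixed ring every associated prime $q$ of the integral closure of $J$ satisfies $\ell(J_q)=\height q$, so equimultiplicity forces every such $q$ to be minimal over $J$, whence $I$ lies in the integral closure of $J$. Equimultiplicity and quasi-unmixedness do their real work through the associated primes of the integral closure, not through specialization modulo parameters.
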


\begin{theorem}
\label{main theorem converse}
Let $(R,\m)$ be a quasi-unmixed 
 local ring of dimension $d\geq 1$.  Let $\a$ be an ideal of height $h$ with $1 \leq h \leq d-1$ and $\minass(R/\a)=\{P_1,\ldots ,P_s\}$ such that $\height P_i=h$ for all $i=1,\ldots ,s$.
If there exist $x_1,\ldots, x_{d-h}$ a system of parameters of $R/\a$  and $ f_i\in \a^{(k_i)}$ such that conditions $(\ref{main theorem one})$, $(\ref{main theorem two})$ and $(\ref{main theorem three})$ of  Theorem \ref{main theorem} are satisfied, then $\R_s(\a)$ is Noetherian.
 \end{theorem}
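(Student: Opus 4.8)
The plan is to reduce the statement to the criterion recalled before Theorem~\ref{analytic}: it suffices to produce an integer $k \geq 1$ with $\ell(\a^{(k)}) = \height(\a^{(k)}) = h$, for then \cite[Theorem~3.3]{ghnv} shows $\R_s(\a)$ is Noetherian. Fix a common multiple $k$ of $k_1, \ldots, k_h$, set $g_i := f_i^{k/k_i}$ and $J := (g_1, \ldots, g_h)$. Since $f_i \in \a^{(k_i)}$ we have $g_i \in (\a^{(k_i)})^{k/k_i} \subseteq \a^{(k)}$, so $J \subseteq \a^{(k)}$, and I would prove that $J$ is in fact a reduction of $\a^{(k)}$ by verifying the four hypotheses of B\"oger's theorem (Theorem~\ref{boger}).

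First I would dispose of the two structural hypotheses. Because $x_1, \ldots, x_{d-h}, f_1, \ldots, f_h$ is a system of parameters, so is $x_1, \ldots, x_{d-h}, g_1, \ldots, g_h$ (the $g_i$ have the same radical as the $f_i$); hence $(\underline{x}) + J$ is $\m$-primary. Adding the $d-h$ elements $x_j$ raises height by at most one each, so $d = \height((\underline{x}) + J) \leq \height J + (d-h)$, forcing $\height J \geq h$; with $\height J \leq \mu(J) = h$ this gives $\height J = h$, and since $h \leq \ell(J) \leq \mu(J) = h$ also $\ell(J) = h$, so $J$ is equimultiple. As $R$ is quasi-unmixed it is catenary and equidimensional, so every minimal prime of $J$ has height $h$ and coheight $d-h$; moreover each $P_i$ contains $J$ (since $J \subseteq \a^{(k)} \subseteq P_i$) and, having height $h = \height J$, is a minimal prime of $J$.

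The crux is the remaining two B\"oger conditions, $\sqrt{J} = \sqrt{\a^{(k)}}$ and $e(J R_{P_i}) = e(\a^{(k)} R_{P_i})$ for each $i$, and this is exactly where hypothesis (\ref{main theorem three}) enters. I would compute the parameter multiplicity $e_{(\underline{x}, J)}(R)$ in two ways. On one hand, raising the parameters $f_i$ to the powers $k/k_i$ multiplies the multiplicity by $\prod_i (k/k_i) = k^h / \prod_i k_i$, so together with (\ref{main theorem three}),
\[
e_{(\underline{x}, J)}(R) = \frac{k^h}{\prod_i k_i}\, e_{(\underline{x}, f_1, \ldots, f_h)}(R) = k^h \sum_{i=1}^{s} e_{\a R_{P_i}}(R_{P_i})\, e_{\underline{x}}(R/P_i).
\]
On the other hand, the associativity formula applied to the parameter ideal $(\underline{x})$ on $R/J^n$ (whose top-dimensional minimal primes are exactly the minimal primes $Q$ of $J$, all of coheight $d-h$) gives $e_{(\underline{x}, J)}(R) = \sum_Q e_{J R_Q}(R_Q)\, e_{\underline{x}}(R/Q)$. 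Since $J \subseteq \a^{(k)}$ and $\a^{(k)} R_{P_i} = (\a R_{P_i})^k$, for $Q = P_i$ one has $e_{J R_{P_i}}(R_{P_i}) \geq e_{\a^{(k)} R_{P_i}}(R_{P_i}) = k^h e_{\a R_{P_i}}(R_{P_i})$, while every summand $e_{J R_Q}(R_Q)\, e_{\underline{x}}(R/Q)$ is strictly positive (as $(\underline{x})$ remains a parameter ideal on $R/Q$ and $JR_Q$ is $QR_Q$-primary). Comparing the two expressions forces all inequalities to be equalities: there can be no minimal prime $Q$ of $J$ besides $P_1, \ldots, P_s$, whence $\sqrt{J} = \bigcap_i P_i = \sqrt{\a} = \sqrt{\a^{(k)}}$, and $e(J R_{P_i}) = k^h e_{\a R_{P_i}}(R_{P_i}) = e(\a^{(k)} R_{P_i})$ for each $i$.

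With all four hypotheses in place, B\"oger's theorem gives that $J$ is a reduction of $\a^{(k)}$, so $h = \height(\a^{(k)}) \leq \ell(\a^{(k)}) \leq \mu(J) = h$, i.e.\ $\ell(\a^{(k)}) = \height(\a^{(k)})$; then \cite[Theorem~3.3]{ghnv} yields that $\R_s(\a)$ is Noetherian. I expect the positivity-and-comparison step of the previous paragraph to be the main obstacle, since it is the one place the numerical condition (\ref{main theorem three}) is used and it is precisely what excludes spurious components of $(f_1, \ldots, f_h)$; the supporting facts ($\a^{(k)} R_{P_i} = (\a R_{P_i})^k$, the behaviour of multiplicity under powers of parameters, and the associativity formula) are standard and would need only careful bookkeeping.
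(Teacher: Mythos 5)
Your proof is correct, and its overall architecture coincides with the paper's: combine condition (\ref{main theorem three}) with a decomposition of $e_{(\underline{x})+J}(R)$ over the minimal primes of $J$, use strict positivity of each term to conclude that $J$ has no minimal primes beyond $P_1,\ldots,P_s$ and that $e_{JR_{P_i}}(R_{P_i})=e_{\a^{(k)}R_{P_i}}(R_{P_i})$, and finish with B\"oger's theorem (Theorem~\ref{boger}) and the Noetherian criterion of \cite{ghnv}. The one substantive difference lies in how the decomposition is obtained. You invoke the associativity law for a split system of parameters, $e_{(\underline{x},J)}(R)=\sum_Q e_{JR_Q}(R_Q)\,e_{\underline{x}}(R/Q)$, as an \emph{equality}; the paper never asserts this equality, but derives only the one-sided inequality $e_J(B)\ge\sum_{P\in F}e_{JR_P}(R_P)\,e_{\underline{x}}(R/P)$, where $B=R/(\underline{x})$, by bounding $\lm_R(R/((\underline{x})+J^n))$ below by $e_{\underline{x}}(R/J^n)$, expanding the latter via the associativity formula for modules \cite[Theorem~14.7]{mat}, and letting $n\to\infty$ (see (\ref{converse eq})). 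Since the chain of comparisons closes up to the same quantity, the one-sided estimate suffices, which makes the paper's version more self-contained; your equality is a genuine classical theorem (Lech; Nagata, \emph{Local Rings}; Zariski--Samuel), so there is no gap, but it needs an explicit citation or proof, and in a general local ring its statement requires exactly the side conditions $\height Q=h$ and $\dim R/Q=d-h$ on the summation, which you correctly secure from quasi-unmixedness and the fact that $(\underline{x})+J$ is $\m$-primary. In two respects you are more careful than the paper: you make the reduction to a common exponent explicit ($g_i=f_i^{k/k_i}$ together with the scaling $e(\underline{x},\ldots,f_i^{a_i},\ldots)=a_i\,e(\underline{x},\ldots,f_i,\ldots)$, where the paper only says ``by passing to high powers of the $f_i$'s''), and you actually verify B\"oger's equimultiplicity hypothesis $\height J=\ell(J)=h$, which the paper asserts without proof.
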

\begin{proof}
By passing to high powers of the $f_i$'s we may assume that $k=k_i$ for all $1\leq i \leq h$. Fix $k$ and put $I={\a}^{(k)}$ and $J=(f_1,\ldots ,f_{h})$. We will show that $J$ is a reduction of $I$. Let $B=R/(x_1,\ldots ,x_{d-h})$. Then as $\underline{x}=(x_1,\ldots ,x_{d-h})$ is $R$-regular, by \cite[Theorem 14.11]{mat}  and condition (\ref{main theorem three}) we have 
\beq
\label{eq7}
e_J(B)
=e_{(\underline{x})+J}(R)
= k^{h} \sum_{i=1}^s
   e_{\a R_{P_i}}(R_{P_i})
   e_{\underline{x}} \left( \f{R}{P_i}\right)  .
\eeq
Let $F=\minass_R (R/J)$. Then for all  $n\geq 1$,  $F=\minass_R (R/J^{n})$. Hence  we get 
\beq
\label{converse eq1}  \nonumber
       {e_J(B)}
&=& \left[\limm \f{h!}{n^{h}} \lm_B(B/J^{n}) \right]\\ \nonumber
& = & \limm \left[ \f{h!}{n^{h}} \lm_R(R/(\underline{x})+J^{n})\right]\\ \nonumber
& \geq & \limm  \f{h!}{n^{h}}  \left[e_{\underline{x}R} \left( \f{R}{J^{n}} \right) \right]\\ \nonumber
& = & \limm \f{h!}{n^{h}}  \left[ \displaystyle{\sum_{P\in F}\lm_{R_P}(R_P/J^{n}R_P)e_{\underline{x}}(R/P)} \right]
\mbox{by \cite[Theorem~14.7]{mat}}\\
&=&{\displaystyle{\sum_{P\in F}e_{JR_P}(R_P)e_{\underline{x}}(R/P)}}\label{converse eq}
\eeq
As $\height I=\height J=h$ and $J \subseteq I$  for all $i=1, \ldots, s $,  $P_i\in  F$,
\beq
\label{converse eq2}
 \sum_{P\in F}e_{JR_P}(R_P)e_{\underline{x}}(R/P)
 \geq  \sum_{i=1}^se_{JR_{P_i}}(R_{P_i})e_{\underline{x}}(R/P_i).
\eeq
Now from (\ref{converse eq}) and  (\ref{converse eq2}) we get 
 \beq\label{eq11}
 {e_J(B)}
\geq \sum_{i=1}^se_{JR_{P_i}}(R_{P_i})e_{\underline{x}}(R/P_i).
\eeq 
Since $J\subseteq I$, for $1\leq i \leq s$,
\beq
e_{JR_{P_i}(R_{P_i})} &\geq & e_{IR_{P_i}}(R_{P_i})
\label{converse eq4}
\eeq
Since $I^{n}R_{P_i}   = ({\a}^{(k)}R_{P_i})^{n}={\a}^{kn}R_{P_i}$,  we have 
\beq
 \label{converse eq5} 
e_{IR_{P_i}}(R_{P_i})= \limm \frac{h!}{n^h}\lm(R_{P_i}/I^{n}R_{P_i})
                                 =   \limm  \frac{h!}{n^h}\lm(R_{P_{i}}/{\a}^{kn}R_{P_i})
                            = k^{h}e_{{\a}R_{P_i}}(R_{P_i}) .  
\eeq 
Thus from (\ref{eq11}), (\ref{converse eq4}), (\ref{converse eq5}) and (\ref{eq7}) we have 
\begin{eqnarray*}
e_J(B)&\geq &\displaystyle{\sum_{i=1}^se_{JR_{P_i}}(R_{P_i})e_{\underline{x}}(R/P_i)} \\
& \geq & \displaystyle{\sum_{i=1}^se_{IR_{P_i}}(R_{P_i})e_{\underline{x}}(R/P_i)}\\
& = & \displaystyle{\sum_{i=1}^se_{{\a}R_{P_i}}(R_{P_i})k^{h}e_{\underline{x}}(R/P_i)}\\
& = & e_{J}(B) \label{eq6}
\end{eqnarray*}
Therefore we get $F=\{P_1,\ldots ,P_s\}$ and $e_{JR_{P_i}}(R_{P_i})=e_{IR_{P_i}}(R_{P_i})$ for all $1\leq i \leq s.$  Now since $R$ is quasi-unmixed and $J$ is an equimultiple ideal, by Theorem~\ref{boger},  $J$ is a reduction of $I$, which implies that $\ell(I)=h$. Thus by Theorem \cite[Theorem 3.6]{ghnv}, $\R_s(\a)$ is Noetherian. 
\end{proof}

\begin{example} Theorem \ref{main theorem converse} may not be true if $R$ is quasi-unmixed but  $\height(\a)=0$. 
Let $R = k[[x,y]]/ (xy)  = k[[ \overline{x}, \overline{y}]]$ and let $\a =  (\overline{x})$.  Then $J = (\overline{x})$ is a minimal reduction of $I$ and $f= \overline{x + y}$ is a  parameter  for $R$ as $\dim(R)=1$.  
Since $\a^m R_{\a} \cap R = \a   =  (\overline{x})$ for all $m \geq 1$, $R/ \a^{(m)}$ is Cohen-Macaulay for all $m \geq 1$, but  $\R_s(\a)$ is not Noetherian. \qed
 \end{example}

\begin{example} Theorem~\ref{main theorem converse} does not hold true if $R$ is not quasi-unmixed.
Let $R = k[[x,y,z]]/ (xy, xz)  = k[[ \overline{x}, \overline{y}, \overline{z}]]$ and let $\a =  (\overline{x},\overline{y})$.  Put  $x_1 =   (\overline{x + y})$. Then $(x_1)  \a = \a^2.$ Hence $(x_1) $  is a minimal reduction of $\a$. Moreover,  if $f_1 = \overline{x + y+z}$, then $x_1, f_1$   is a system of parameters  for $R.$
 Since   $\lm(R/(x_1, f_1)^n)=\binom{n+1}{2}+n,$
$e_{(x_1,f_1)}(R)=1.$ As   $\a$ is a prime ideal, $h=1$, $k_1 = 1$, $s=1$, we get
\beqn
 \left ( \displaystyle{\prod_{j=1}^{h}k_j}\right )\displaystyle{\sum_{i=1}^se_{\a R_{P_i}}(R_{P_i})e_{(x_1,\ldots ,x_{d-h})}(R/P_i)  }
= e_{( \overline{x}, \overline{y})}R_{(x,y)} \cdot e_{(   \overline{x + y+z ) }} (R/ ( \overline{x},\overline{y}))=e_{(x_1,f_1)}(R)=1.
\eeqn
As   $\a^n R_{\a} \cap R =  (\overline{x}, \overline{y^n})$ for all $n \geq 1,$ $\R_s(\a)$ is not Noetherian. \qed
 \end{example}

 \section{The symbolic Rees algebra  of the edge ideal of complete graph}
 
 Let $\kk$ be a field and $S = \kk[x_1, \ldots, x_{n}]$, a polynomial ring in $n$ indeterminates.   Let $G$ be a complete graph on $n$ vertices and $I=I(G)  = (x_i x_j \mid   \{i,j\} \text{ is an edge of } G) \subset S$ be the corresponding edge ideal of $G$. 
  By  a result of Herzog, Hibi and Trung \cite[Corollary~1.3]{herzog-hibi-trung},  $\R_s(I) $ is Noetherian.  We give another proof of this result.
  
 Let ${\bf x} := x_1, \ldots, x_n$ and $\sigma_1({\bf x}),\ldots, \sigma_n({\bf x})$  be the elementary symmetric  functions of  $\bf x.$
  \begin{proposition} \label{sym}  The functions $\sigma_j({\bf x})$ for $j=1, 2, \ldots, n$ form a homogeneous system of parameters of $S$ and
  $$\lm(S/(\sigma_1, \sigma_2, \ldots, \sigma_n))=n!.$$
  
  \end{proposition}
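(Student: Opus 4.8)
The plan is to establish the two assertions separately. Since $S$ has Krull dimension $n$ and each $\sigma_j$ is homogeneous of positive degree $j$, to prove that $\sigma_1,\ldots,\sigma_n$ form a homogeneous system of parameters it suffices to show that the ideal $\q:=(\sigma_1,\ldots,\sigma_n)$ is primary to the graded maximal ideal $\n:=(x_1,\ldots,x_n)$, equivalently that $\dim S/\q=0$. I would then compute the colength separately using a regular-sequence/Hilbert-series argument.

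For the system-of-parameters claim I would use the Vieta identity that expresses the $\sigma_j$ as the coefficients of $\prod_{i=1}^n(T-x_i)$: in $S[T]$ one has
\[
\prod_{i=1}^n (T-x_i)=T^n-\sigma_1 T^{n-1}+\sigma_2 T^{n-2}-\cdots+(-1)^n\sigma_n .
\]
Substituting $T=x_j$ annihilates the left-hand side, so $x_j^n=\sigma_1 x_j^{n-1}-\cdots-(-1)^{n-1}\sigma_n\in\q$ for each $j$. Hence every $x_j$ lies in $\sqrt{\q}$, giving $\n\subseteq\sqrt{\q}$; the reverse inclusion is immediate because each $\sigma_j$ has positive degree. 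Thus $\sqrt{\q}=\n$, so $\dim S/\q=0$ and $\sigma_1,\ldots,\sigma_n$ is a homogeneous system of parameters.

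For the colength I would exploit that $S$ is a polynomial ring, hence Cohen--Macaulay, so the homogeneous system of parameters $\sigma_1,\ldots,\sigma_n$ is automatically a regular sequence. Killing a homogeneous nonzerodivisor of degree $d$ multiplies the Hilbert series by $(1-t^d)$ via $0\to M(-d)\to M\to M/fM\to 0$, so iterating over the degrees $\deg\sigma_j=j$ yields
\[
H_{S/\q}(t)=\frac{\prod_{j=1}^n(1-t^j)}{(1-t)^n}=\prod_{j=1}^n\bigl(1+t+\cdots+t^{\,j-1}\bigr),
\]
a polynomial since $S/\q$ has finite length. Evaluating at $t=1$ then gives $\lm(S/\q)=H_{S/\q}(1)=\prod_{j=1}^n j=n!$.

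Each step is short, and the only point demanding care is the passage from ``system of parameters'' to ``regular sequence,'' which rests on the Cohen--Macaulayness of $S$ together with the Hilbert-series bookkeeping above; I expect this to be the main (though standard) obstacle. If one prefers to bypass the Hilbert series entirely, an alternative for the colength is to invoke the classical fact that $S$ is free of rank $n!$ over the symmetric subring $\kk[\sigma_1,\ldots,\sigma_n]$ (Artin's basis theorem), so that $S/\q\cong S\otimes_{\kk[\sigma_1,\ldots,\sigma_n]}\kk$ has $\kk$-dimension exactly $n!$, yielding $\lm(S/\q)=n!$ directly.
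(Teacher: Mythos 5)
Your proof is correct and follows essentially the same route as the paper: both use the Vieta/monic-polynomial relation $x_j^n-\sigma_1x_j^{n-1}+\cdots+(-1)^n\sigma_n=0$ to see that $(\sigma_1,\ldots,\sigma_n)$ is primary to the irrelevant maximal ideal (the paper phrases this via integrality of $S$ over $\kk[\sigma_1,\ldots,\sigma_n]$, you via $x_j^n\in\q$ directly, which is the same computation), and both then compute $\lm(S/\q)$ by the regular-sequence Hilbert-series argument evaluated at $1$. Your explicit justification that the homogeneous system of parameters is a regular sequence via Cohen--Macaulayness of $S$ fills in a step the paper leaves implicit, and your alternative via freeness of $S$ over the invariant ring is a valid shortcut, but neither changes the substance of the argument.
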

  \begin{proof}
   Note that each $x_j$, for $j=1, 2, \ldots, n$, is a root of the monic polynomial 
  $$x^n-\sigma_1x^{n-1}+\sigma_2x^{n-2}-\cdots+(-1)\sigma_n.$$
  Hence $S$ is an integral extension of the polynomial ring $R=\kk[\sigma_1, \sigma_2, \ldots, \sigma_n].$ Therefore $J=(\sigma_1, \ldots, \sigma_n)S$ is $\m=(x_1, x_2, \ldots, x_n)S$-primary. Thus $\sigma_1, \sigma_2, \ldots, \sigma_n$ is a homogeneous system of parameters of $S.$
  
  (2) Since $\sigma_1, \sigma_2, \ldots, \sigma_n$ is an $S$-regular sequence, the Hilbert series of $S/J$ is given by
  $$
         H(S/J, u)=\sum_{n=0}^\infty \lm((S/J)_n)u^n
  =\frac{(1-u)(1-u^2)\ldots(1-u^n)}{(1-u)^n}=\prod_{j=1}^{n-1}(1+u+u^2+\cdots+u^j).$$
  Therefore $\lm(S/J)=H(S/J, 1)=n!.$
  \end{proof}
  
  \begin{example}
  \label{complete-graph}
Let $S = \kk[x_1, \ldots, x_{n}]$. Put $R= S_{\m}$ and $\af = I(G)R$ where $\m = (x_1, \ldots, x_{n})$. 
 \been
 
 \item
    \label{complete-graph-11}
     $\af$ is  unmixed and $R/ \af^{(k)}$ is  Cohen-Macaulay for all $k \geq 1$.
 \item
   \label{complete-graph-2}
 $\R_s(\af)$ is Noetherian. 
 \item
   \label{complete-graph-3}
 $\af$ is a set-theoretic complete intersection. 
\eeen
\end{example}
\begin{proof}  (\ref{complete-graph-11}) Put $\p_i =  (x_1, \ldots, x_{i-1}, x_{i+1}, \ldots, x_{n}  )$, $1 \leq i \leq n$.  
By  \cite[Corollary 3.35]{tyul} it follows that 
   for all $n \geq 1$, 
 \beqn
 \af^{(k)} = \bigcap_{i=1}^{n} \p_i^{k}. 
  \eeqn
As $\height \p_i=n-1$ for all $\p_i\in \Ass (S/\a)$, $\a$ is an unmixed ideal.  
Since $x=\sigma_1 \not \in \p_i$ for all $i=1, \ldots, n$ it is a nonzerodivisor on $ R/ \af^{(k)}$ for all $k \geq 1$. Hence $R/\a^{(k)}$ is  Cohen-Macaulay for all $k\geq 1$.

(\ref{complete-graph-2}) Let $x=\sigma_1.$ Then for all $j=2, \ldots, n$,   
  \beq
  \label{example-symmetric-fi} 
  \sigma_{j} ({\bf x}) &\in& I^{(j-1)}  \hspace{.5in} \mbox{\cite[Lemma 2.6]{bocci}}\\
\label{example-symmetric-multiplicity-1}
e_{(x)} ( R/ \p_i)
&=&  \lm\left( \f{R} {(x_1, \ldots, x_{i-1}, x, x_{i+1}, \ldots, x_{n}  )}\right)
=1\\
\label{example-symmetric-multiplicity-2} \nonumber
e_{\af}(  R_{\p_i})
&=& \lm \left( \f{R_{\p_i}}{\af R_{\p_i} }\right)
 = \lm \left(  \f{R_{\p_i}}{ \p_iR_{\p_i}}\right) 
=1\\
e_{ (x, \sigma_1, \ldots, \sigma_n)}(R) &=& \lambda \left( \frac{R} {(\sigma_1, \ldots, \sigma_n)R }\right)
= n!.
\eeq

 By the equation  (\ref{example-symmetric-fi}), 
$k_j := j-1$ for all $j=2, \ldots, n$. 
  Hence by Proposition \ref{sym}
  \beq
  \label{example-symmetric-RHS}
  \prod_{j=2}^{n} k_j \sum_{i=1}^{n} e_{\af}(  R_{\p_i})e(x, R/ \p_i)
  = (n-1)! n = n!=e_{(x,\sigma_2, \ldots, \sigma_n)}(R).
  \eeq
 By Proposition \ref{sym}, equation (\ref{example-symmetric-RHS}) and Theorem \ref{main theorem} we conclude that  $\R_s(\af )$ is Noetherian.
 
 (\ref{complete-graph-3}) By (\ref{complete-graph-11}), (\ref{complete-graph-2}) and Theorem \ref{analytic}, $\a$ is a set-theoretic complete intersection.
   \end{proof}

  \section{The symbolic Rees algebra of the Fermat  ideal}
  
 Let   $S = \C[x,y,z]$ be the polynomial ring and $\m = (x,y,z)$.  Let $n \geq 3$ and 
\beqn
r_n := y^n-z^n, \hspace{.2in} s_n =z^n-x^n, \hspace{.2in} t_n =  x^n-y^n, \text{ and } 
    J_n=(x r_n, y s_n, z t_n).
    \eeqn
    The ideal  $J_n$ is called the Fermat ideal. The ideal $J_n$ defines a set of $n^2 + 3$  points in $\P^2$.
 For $n=3$, this ideal was studied in  \cite{dum-sze-gas} in relation with the containment problem. In \cite{nagel-alexandra} the authors study the the symbolic Rees algebra of $J_n$. 
 
\begin{lemma}
\label{lemma-fermat-ideal}
Let $R = S_{\m}$ and $I_n= J_nR$.  Then
\been
\item
 \label{lemma-fermat-ideal-1}
 $J_n$ is a radical ideal.
 \item
 \label{lemma-fermat-ideal-2}
$ e(R/ I_n) = e(S/ J_n) = n^2 + 3$.
 \eeen
\end{lemma}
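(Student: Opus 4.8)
The plan is to exhibit an explicit Hilbert--Burch resolution of $S/J_n$, read the multiplicity off it, and then pin down reducedness by comparing that multiplicity against a direct count of the points of $V(J_n)$. Write $g_1=xr_n,\ g_2=ys_n,\ g_3=zt_n$. First I would record two syzygies on $(g_1,g_2,g_3)$. The relation $r_n+s_n+t_n=0$ gives, after multiplying by $xyz$, the degree $2$ syzygy $(yz,\,xz,\,xy)$. Separately, $x^{n-1}g_1+y^{n-1}g_2+z^{n-1}g_3$ telescopes to $0$, giving the degree $n-1$ syzygy $(x^{n-1},\,y^{n-1},\,z^{n-1})$. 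Assembling these as columns yields
\[
\phi=\begin{pmatrix} yz & x^{n-1}\\ xz & y^{n-1}\\ xy & z^{n-1}\end{pmatrix},
\]
and a short check shows its signed maximal $2\times 2$ minors are $-g_1,\,g_2,\,-g_3$, so $I_2(\phi)=J_n$.

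Next I would invoke the Hilbert--Burch theorem. Since $V(J_n)$ is $0$-dimensional in $\P^2$ we have $\height J_n=2$, which equals the grade of $I_2(\phi)$ in the Cohen--Macaulay ring $S$; hence $S/J_n$ is Cohen--Macaulay of dimension $1$ with minimal free resolution
\[
0 \longrightarrow S(-(n+3))\oplus S(-2n)\stackrel{\phi}{\longrightarrow} S(-(n+1))^{3}\longrightarrow S\longrightarrow S/J_n\longrightarrow 0 .
\]
The Hilbert series of $S/J_n$ therefore has numerator $K(u)=1-3u^{n+1}+u^{n+3}+u^{2n}$, and since $\dim S/J_n=1$ the multiplicity is $e(S/J_n)=\tfrac12 K''(1)=\tfrac12(2n^2+6)=n^2+3$. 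This is a pure computation and settles the graded part of (2).

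Finally I would establish reducedness and transfer to $R$. Solving $xr_n=ys_n=zt_n=0$ directly: where $xyz\neq 0$ the equations force $x^n=y^n=z^n$, giving the $n^2$ points $[1:\zeta^i:\zeta^j]$ with $\zeta$ a primitive $n$-th root of unity, while setting any one coordinate to $0$ leaves only the three coordinate points; thus $V(J_n)$ consists of exactly $n^2+3$ distinct points. Because $e(S/J_n)$ equals the degree of this scheme, namely $\sum_{P}\ell(\mathcal O_{X,P})\ge \#V(J_n)=n^2+3$, the equality $e(S/J_n)=n^2+3$ forces every local length to be $1$; the scheme is reduced and $J_n$ is radical, proving (1). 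For (2) it remains to note $e(R/I_n)=e(S/J_n)=n^2+3$, since $R/I_n=(S/J_n)_{\m}$ and localizing a standard graded ring at its irrelevant maximal ideal preserves the Hilbert--Samuel multiplicity.

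The only genuinely creative step is guessing the second syzygy $(x^{n-1},y^{n-1},z^{n-1})$ and verifying that the minors of $\phi$ reproduce $J_n$ exactly, signs included; once $\phi$ is in hand, Hilbert--Burch supplies Cohen--Macaulayness and exactness for free, and the rest is bookkeeping. The one subtlety to guard against is that the length-count argument needs the \emph{exact} number of distinct points rather than an upper bound, so I would make sure the point enumeration above is shown to be exhaustive.
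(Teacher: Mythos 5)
Your proof has the same skeleton as the paper's: produce the free resolution of $S/J_n$, read off the Hilbert series and hence $e(S/J_n)=n^2+3$, enumerate the $n^2+3$ points of $V(J_n)$, and force every local length to equal $1$. The two local differences both favor you: the paper merely asserts the minimal free resolution, whereas you derive it from the two explicit syzygies via Hilbert--Burch (which also hands you Cohen--Macaulayness, replacing the paper's Auslander--Buchsbaum step); and you run the final count as the degree of $\mathrm{Proj}(S/J_n)$ where the paper uses the associativity formula for $e(\m, R/I_n)$ over a primary decomposition --- equivalent bookkeeping.

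There is, however, one inference that is not valid as written: ``the scheme is reduced and $J_n$ is radical.'' The scheme $X=\mathrm{Proj}(S/J_n)$, its local rings $\mathcal{O}_{X,P}$, and the Hilbert polynomial of $S/J_n$ all depend only on the saturation $J_n:\m^{\infty}$, so your length count proves only that the \emph{saturation} is radical. An ideal with reduced $\mathrm{Proj}$ need not be radical: $(x,y)\cap(x,y,z)^2$ in $k[x,y,z]$ defines the reduced point $[0:0:1]$ but is not a radical ideal. What you must add is that $J_n$ has no embedded ($\m$-primary) component, i.e.\ that $J_n$ is saturated. This follows from material you already have: Hilbert--Burch gives $\projdim_S(S/J_n)=2$, so $\depth (S/J_n)_{\m}=1$ by Auslander--Buchsbaum, hence $\m\notin\Ass(S/J_n)$; since every associated prime of the homogeneous ideal $J_n$ is homogeneous and the minimal ones are exactly the $n^2+3$ height-two primes of the points, $J_n$ is the intersection of its minimal primary components, and reducedness of $X$ says each of those components is prime. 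With that sentence inserted your argument is complete; this is also precisely the (implicit) role that Cohen--Macaulayness plays in the paper's own associativity-formula step.
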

\begin{proof} Let $\eta$ be a primitive $n^{\rm th}$ root of unity. 
Put $P_{ij} = (y-\eta^i z, z -\eta^j x)$  ($i,j=0, \ldots, n-1$), $P_1 = (y,z)$, $P_2 = (x,z)$, $P_3 = (x,y)$ and 
\beq
\label{ex2-defn-Kn}
K_n =  \bigcap_{i,j=0}^{n-1
} P_{ij} R \bigcap P_1R \bigcap P_2R \bigcap P_3R.
\eeq 
Then  clearly, $I_n  \subseteq  K_n$.
  To prove the lemma it is enough to show that equality holds. 
A minimal free resolution of $J_n $ is 
 \beq 
  \label{minimal free resolution}
  0  
\rightarrow
 S[-n-3] \oplus S[-2n]  
\xrightarrow{ \displaystyle
 \begin{pmatrix}
yz  & x^{n-1} \\
xy & z^{n-1}\\
-xz & -y^{n-1}
\end{pmatrix}}  S[-n-1)]^3  
\xrightarrow{  \displaystyle
 \begin{pmatrix}
xr^n & ys^n & zt^n\\
  \end{pmatrix}} 
 S
\rightarrow \frac{S}{J_n}
 \rightarrow 0.
  \eeq
Since  $\m$ is a   maximal ideal in $S$,  $\depth(S/ J_n)_{\n} \leq  \dim (S/ J_n)_{\m}=1$.   From  (\ref{minimal free resolution}) the projective dimension of $(S/ J_n)_{\m}=2$.
By  the Auslander-Buchsbaum formula, $ (S/ J_n)_{\m}$  is Cohen-Macaulay.
 Using the exact sequence (\ref{minimal free resolution}), we obtain the Hilbert series of $S/J_n$. Let $u$ be an indeterminate. Then 
 $$H(S/J_n, u)=\frac{1-3 u^{n+1}+u^{n+3}+u^{2n}}{(1-u)^3}$$
Since $\dim S/J_n=1,$ we can write $H(S/J_n, u)=\frac{f(u)}{1-u}$ for some $f(u)\in \Z[u].$ Let $g(u)=1-3 u^{n+1}+u^{n+3}+u^{2n}.$ Then $g(u)=(1-u)^2f(u).$  Therefore 
$$e(S/J_n)=f(1)=\lim_{u\to 1} \frac{g(u)}{(1-u)^3}=n^2+3.$$

Every minimal prime of $K_n$ is also that of $J_n.$ Let $a=n^2+3$ and 
$$J_n=   \bigcap_{i,j=0}^{n-1
} Q_{ij} R \bigcap Q_1R \bigcap Q_2R \bigcap Q_3R
     \bigcap_{j=a+1}^b Q_j$$ be a reduced primary decomposition of $J_n$ and let $Q_j$ be $\p_j$-primary for all $j=1,2,\ldots,b.$ By the associativity formula for multiplicities, we have
$$
e(\m, R/I_n)=\sum_{j=1}^be(\m, R/\p_j) \lm ((R/I_n)_{\p_j})=n^2+3.
$$
This shows that $a=b=n^2+3$ and $Q_j=\p_j$ for all $j=1,2,\ldots, a.$ Therefore 
$I_n=K_n$ and $I_n$ is a radical ideal.
\end{proof}

 \begin{example}
 \label{example-fermat-ideal}
Let $R = S_{\m}$ and $I_n= J_nR$.  Then
\been
\item
 \label{example-fermat-ideal-1}
For all $n\geq 3,$ the analytic spread of $I_n$ is $3$.

\item
 \label{example-fermat-ideal-2}
For all $k \geq 1$, $R/I_n^{(k)}$  is Cohen-Macaulay.
\item
 \label{example-fermat-ideal-3}
Put $\a= I_n$.  Then 
\been
\item
 \label{example-fermat-ideal-a}
$\mathcal R_s(\a)$ is Noetherian.
\item
 \label{example-fermat-ideal-b}
$\a$ is a set-theoretic complete intersection.
\eeen
\eeen
\begin{proof}   (\ref{example-fermat-ideal-1})
Suppose that $\ell(I_n)=2.$ Since $I_n$ is a radical ideal and $R_\p$ is regular for all minimal primes of $I_n,$ by \cite[Theorem]{cn1976},
$I_n$ is generated by two elements. This is a contradiction. Hence $\ell(I_n)=3.$ \\
(\ref{example-fermat-ideal-2})
Since $I_nR$ is a height two ideal of a $3$-dimensional local ring,
$I_n^{(k)}$ is an unmixed ideal for all $k\geq 1.$  Hence $R/I_n^{(k)}$ is a Cohen-Macaulay local ring for all $k\geq 1.$

(\ref{example-fermat-ideal-a}) 
      Put $r = r_n$, $s=s_n$, $t=t_n$,  $x_1 = x+y+z$,   $f_1 = rs^{n-2}t - 2 r^{n-2}st$ and
 $f_2 =  rs^{n-2}t + 2( s^{n-2} r^2 x^n + t^{n-2} s^2 y^n  + r^{n-2} t^2 z^n).$ We claim that 
$x_1, f_1, f_2$ is a system of parameters in $R$.  We can write
\beqn
 r s^{n-2}t - 2 r^{n-2}st = rst ( s^{n-3}- 2 r^{n-3} )
\eeqn
Let  $\p$ is a prime ideal and $\p \supseteq (x_1,f_1,f_2)$. 
If $r \in \p$, then  as $r+s+t=0$,
\beqn
\p \supseteq (x_1, r, f_2)
 = (x_1, r,     t^{n-2} s^2 y^n  ) 
 = (x_1, r, (-r-s)^{n-2}s^2y^n) 
 = (x_1, r,   s ^ny^n  ).
\eeqn
Let $\eta$ be a primitive $n^{\rm th}$ root of unity. Then we can write
\beqn
r &=& y^n-z^n = \prod_{i=1}^{n} (y- \eta^i z)\\
s &=& z^n-x^n = \prod_{j=1}^{n} (z- \eta^j x).
\eeqn
Hence, for some $i,j = 1, \ldots, n$,
\beq
\label{mprimary} \nonumber
             \p 
&\supseteq& (x_1, y- \eta^i z, (z- \eta^j x)  y)\\ \nonumber
&=& (x+y+z, y- \eta^i z, (z- \eta^j (x+y+z -y-z)) y)\\ \nonumber
&=& (x+y+z, y- \eta^i z,  (z- \eta^j ( -y-z)) y)\\ \nonumber
&=& (x+y+z, y- \eta^i z,  (z+  \eta^j ( y+z)) y)\\ \nonumber
&=& (x+y+z, y- \eta^i z,  (z+  \eta^j ( \eta^i z +z))  ( \eta^i z))\\ \nonumber
&=& (x+y+z, y- \eta^i z,   \eta^i z^2( (1+   \eta^{i+j}  + \eta^j  )))  \\
&=&  (x+y+z, y- \eta^i z,   z^2).
\eeq
Hence $\p=\m$.
 Similarly, if $s \in \p$ or   $t \in \p$, then 
 using the similar argument as in (\ref{mprimary}) we get  $\p=\m$. 
   
We now compute $e_{(x_1)} (R/ (f_1,f_2))$. Since $x,f_1, f_2$ is a system of parameters, 
\beq
\label{ex2-mult-sop} \nonumber
&&e_{(x_1)} (R/ (f_1,f_2)) \\ \nonumber
&=&  e_{(x_1, f_1, f_2)}(R)\\
&=& \begin{cases}
e_{(x_1 , r, f_2)}(R) + e_{(x_1, s, f_2)}(R) + e_{(x_1, t, f_2)}(R)  & n=3\\
e_{(x_1 , r, f_2)}(R) + e_{(x_1, s, f_2)}(R) + e_{(x_1, t, f_2)}(R) + e_{(x_1, s^{n-3}- 2r^{n-3}, f_2)}(R) & n >3\\
\end{cases}.
\eeq
Let  $n>3$ and $s^{n-3}- 2 r^{n-3} \in \p$.  Then  
 \beqn
 \p 
 &\supseteq&  (x_1,  s^{n-3}- 2 r^{n-3} , f_2) \\
  \eeqn
  
  Let $\zeta$ be the primitive $(n-3)^{\rm th}$ root of unity. Then
  \beqn
   s^{n-3}- 2 r^{n-3} = \prod_{i=0}^{n-3} (s-  \sqrt[n-3]{2}\zeta^i r)
  \eeqn
Fix $i$ ($0 \leq i< n-3$).  Put  $\xi = \zeta^i$ and  $v=  s-ar$ where $a= \sqrt[n-3]{2} \xi r$.  Since   $r + s + t =0$, 
\beq
\label{ex2-simplifying-terms} \nonumber
t &=& -(r+s) = - (1+a) r -v\\ \nonumber
f_2 
&=&   rs^{n-2}t + 2( s^{n-2} r^2 x^n + t^{n-2} s^2 y^n  + r^{n-2} t^2 z^n)\\ \nonumber
&=& r ( v + ar)^{n-2} (-r(1+a) -v)  + 2 ( v + ar)^{n-2} r^2 x^n\\ \nonumber
&&+ 2   ( -(1+a) r + v)^{n-2} (ar)^2 y^n +2 r^{n-2} (  -(1 +a) r -v)^2 z^n \\ \nonumber
&=& -a^{n-2}(1+a) r^n + 2 a^{n-2} x^n r^n  +(-1)^{n-2}2a^2  (1+a)^{n-2} y^n r^n   + 2 (1+a)^2 z^n r^n + g(v) \\
&=&    [a^{n-2}(1+a)  + 2 a^{n-2} x^n  +(-1)^{n-2}2a^2  (1+a)^{n-2} y^n    + 2 (1+a)^2 z^n] r^n + g(v),    
\eeq
where $g(v)$ denotes the terms in $f_2$ which involve  positive powers of $v$.

Since $a^{n-2}(1+a)  + 2 a^{n-2} x^n  +(-1)^{n-2}2a^2  (1+a)^{n-2} y^n    + 2 (1+a)^2 z^n$ is a unit in $R$, 
 if $v \in \p$, then 
$\p \supseteq (x_1, v, f_2) = (x_1, v, r^n) = (x+y+z,  s-  a r, r^n)$. Hence,
  $\p \supseteq  (x+y+z,  s,r)$ which implies that for some $j,k=0, \ldots, n-1$,
  $\p \supseteq (x+y+z, z- \eta^j x, y- \eta^k z)$. Hence $\p = \m $. 

Since  $r+s+t=0$, $(x_1, r, f_2) = (x_1, r, t^{n-2}s^2y^n)$, $x_1, r, f_2$ is a system  of parameters. Hence 
\beq
\label{ex2-x1rf2-sop} \nonumber
 e_{(x_1 , r, f_2)}(R) 
& =& e_{(x_1, r, t^{n-2} s^2 y^n)}(R)\\ \nonumber
& =& e_{(x_1, r, y^n)}(R) + e_{(x_1, r, t^{n-2})}(R) + e_{(x_1, r, s^2)}(R)\\
& =& n^2 + n^2(n-2) + 2n^2
 = n^3 + n^2.
\eeq
Similarly, 
\beq
\label{ex2-sx1sf2-sop}
e(x_1, s, f_2) = e(x_1, t, f_2) = n^3 + n^2. 
\eeq
We now compute $e(x_1, s^{n-3}-2r^{n-3}, f_2)$. Since $x_1, s^{n-3}-2r^{n-3}, f_2$ is a system of parameters, 
\beq
\label{ex2-x2sn-3f2-sop}
e(x_1,s^{n-3}-2r^{n-3}, f_2) 
= n e(x_1, s^{n-3} -2 r^{n-3}, r) = n e( x_1, s^{n-3}, r) =  n^3(n-3).
\eeq 
Combining (\ref{ex2-mult-sop}), (\ref{ex2-x1rf2-sop}), (\ref{ex2-sx1sf2-sop}) and (\ref{ex2-x2sn-3f2-sop}) we get that for all $n \geq 3$
\begin{equation}\label{mul1}
e_{(x_1)}(R/ (f_1,f_2))
= 3(n^3+n^2) + n^4-3n^3 = n^4 + 3n^2= n^2(n^2 + 3).
\end{equation}
As $f_1, f_2 \in I^{(n)}$, $k_1=k_2=n$  and we have
\begin{equation}\label{mul2}
 \left ( \displaystyle{\prod_{j=1}^{2}k_j}\right )
        \displaystyle{\sum_{i=1}^{n^2+3} e_{\a R_{P_i}}(R_{P_i})e_{(x_1)}(R/P_i)} 
        = n^2 (n^2 + 3)
\end{equation}
Then by (\ref{mul1}) and Theorem \ref{main theorem} we have $\mathcal R_s(\a)$ is Noetherian.
        
(\ref{example-fermat-ideal-b})  follows from  (\ref{example-fermat-ideal-2}), (\ref{example-fermat-ideal-a})   and   Theorem~\ref{analytic}.
\end{proof}

\end{example}

\section{The Jacobian ideal of hyperplane arragements}
This example was motivated by the paper by J.~Migliore, U.~Nagel, and H.~Schenck \cite{mig-nag-sch}.

\begin{example}
\label{jacobian}
Let $S = k[x,y,z,w]$ and $f = w(x+y)  (x+y+z+w)$ and $ (J(f))S$, the Jacobian ideal of $f$. Put $R = S_{\m}$ where $\m = (x,y,z,w)$ and ${\a} = (J(f))R$.  
Then 
\been
\item
\label{jacobian-one}
${\a}$ is a height two  unmixed ideal and $R/{\a}^{(n)}$ is Cohen-Macaulay for all $n \geq 1$.

\item
\label{jacobian-two}
The symbolic Rees algebra $\R_s({\a})$ is Noetherian.

\item
\label{jacobian-three}
${\a}$ is a set-theoretic complete intersection.
\eeen

\begin{proof} (\ref{jacobian-one}) One can verify that $f_x=f_y$ and 
\beqn
{\a} &=&  (f_x=2xw+2yw+zw+w^2,  f_z=xw+yw, f_w=x^2+2xy+y^2+xz+yz+2xw+2yw)\\
&=&  (zw+w^2, x^2+2xy+y^2+xz+yz, xw+yw)\\
&=& ( w(z+w), (x+y)(x+y+z), w(x+y))\\
&=& (z+w, x+y) \cap (w, x+y) \cap (w, x+y+z).
\eeqn
Put    $\p_1 = (z+w, x+y)$, $\p_2= (w, x+y)$ and  $\p_3=  (w, x+y+z)$.
Then 
\beqn
{\a}^{(n)} = \p_1^n \cap \p_2^n \cap \p_3^n.
\eeqn
Consider the exact sequences:
\beq
 \label{ses p1 p2}
 0 \lrar \f{R}{\p_1^n \cap \p_2^n}
 \lrar \f{R}{\p_1^n } \oplus \f{R}{ \p_2^n}
 \lrar \f{R}{\p_1^n + \p_2^n}
 \lrar 0,\\
 \label{ses p1 p2 p3}
 0 \lrar \f{R}{\p_1^n  \cap \p_2^n \cap \p_3^n}
 \lrar \f{R}{\p_1^n \cap \p_2^n } \oplus \f{R}{ \p_3^n}
 \lrar \f{R}{ (\p_1^n \cap \p_2^n) + \p_3^n}
 \lrar 0.
 \eeq
 Note that $R/\p_1^n$,  $R/\p_2^n$ and $R/\p_3^n$ are Cohen-Macaulay. 
 Since 
\beqn
 \p_1^n + \p_2^n &=& 
 (   (x+y)^{n-i} (w^i, (z+w)^i):i=0, \ldots, n)
 \eeqn 
 $\height(\p_1^n + \p_2^n)=3$ and $\sqrt{\p_1^n + \p_2^n } = (x+y, z,w)$.  We claim that $x$ is a nonzerodivisor on $R/ \p_1^n + \p_2^n$.
After a change of variables let $y^{\prime} = x+y$, $z^{\prime} = w +z$. Then  we have
 \beqn
 (\p_1^n + \p_2^n)S &=& (   (y^{\prime})^{n-i} (w^i, (z^{\prime})^i):i=0, \ldots, n)  \subseteq k[x, y^{\prime}, z^{\prime},w],\\
 ((\p_1^n \cap \p_2^n) + \p_3^n )S &=& ( (z^{\prime}, y^{\prime})^n \cap (w, y^{\prime})^n) + ( w, y^{\prime} +z^{\prime})^n  \subseteq k[x, y^{\prime}, z^{\prime},w].
 \eeqn
We can write $\p_1^n + \p_2^n = \cap Q_j$ (finite intersection) where each $Q_j$ is generated by pure powers of the variables (see \cite[Theorem~1.3.1]{herzog-hibi}. Then as $(y^{\prime})^n, (z^{\prime})^n, w^n \in \p_1^n + \p_2^n$ and $x$ does not occur in any of generators of $\p_1^n + \p_2^n$, each $Q_j$ is of the form $( (y^{\prime})^{n_{1,j}}, (z^{\prime})^{n_{2,j}}, w^{n_{3,j}})$ for some
 ${n_{1,j}}, {n_{2,j}}{n_{3,j}} \in  \N$. Since $\sqrt{\p_1^n + \p_2^n}S = \cap \sqrt{Q_j}S$ and  $\sqrt{Q_j}S = (y^{\prime}, z^{\prime}, w)$ for all $j$, localizing we conclude that $(y^{\prime}, z^{\prime}, w)R$  is the only associated prime of $(\p_1^n + \p_2^n)R$ and hence $x$ is a nonzerodivisor on $R/ (\p_1^n + \p_2^n)$.  By depth lemma applied to the exact sequence \ref{ses p1 p2}, we see that $R/(\p_1^n \cap \p_2^n)$
 is Cohen-Macaulay.
 Similarly, we conclude that $x$ is a nonzerodivisor on $R/ ( \p_1^n \cap  \p_2^n) + \p_3^n$. By the depth lemma applied to the exact sequence (\ref{ses p1 p2 p3}), it follows that $R/\a^{(n)}=R/(\p_1^n\cap \p_2^n\cap \p_3^n)$ is Cohen-Macaulay.
    
   (\ref{jacobian-two}) Put $x_1 = x$, $x_2 =z$, $g_1 = w(z+w)$, $g_2 = w(x+y)$, $g_3 = (x+y) (x+y+z)$,  $f_1 = g_1 + g_3$ and $f_2 =  f = w(x+y)(x+y+z+w) $. Then $f_1 \in {\a}$. As
   \beqn
   zf_2 &=& zw (x+y) (x+y+z+w) = g_1 g_2 - g_2 g_3,
   \eeqn
 $zf_2\in \a^2 \subseteq {\a}^{(2)} \subset \sqrt{{\a}^{(2)}} = \p_1 \cap \p_2 \cap \p_3$. Since $z \not \in \p_i$, for $i=1,2,3$,  $f_2 \in {\a}^{(2)}$.
Now,
 \beq\label{eq1} \nonumber
         e_{(x_1, x_2, f_1, f_2)}(R)
 &=& e_{(x, z, g_1 + g_3, f)}(R)\\ \nonumber
&=& e_{(x,z,w(z+w) +  (x+y) (x+y+z), w(x+y)(x+y+z+w) )}(R)\\ \nonumber
&=& e_{(x,z, w^2 + y^2, wy(y+w))}(R)\\ \nonumber
& = &e_{( x,z,w^2 + y^2, y)}(R) + e_{( x,z,w^2 + y^2, w)}(R)  + e_{( x,z,w^2 + y^2, y+w)}(R) \\
&=& 6. 
 \eeq
Since $k_1 = 1$ and $k_2=2$, we have
 \beq\label{eq2} \nonumber
 \left ( \displaystyle{\prod_{j=1}^{2}k_j}\right )\displaystyle{\sum_{i=1}^se_{\a R_{P_i}}(R_{P_i})e_{(x_1,x_{2})}(R/P_i)} 
 &=& 1 \cdot 2  \cdot \sum_{i=1}^3 e_{\a R_{P_i}}(R_{P_i})e_{(x_1,x_2)}(R/\p_i)  \\ \nonumber
&=& 2 \cdot \sum_{i=1}^3 e_{{\p_i}{R_{\p_i}}}(R_{\p_i}) e_{(x,z)} (R/ \p_i)\\ \nonumber
 &=& 2 \cdot 3 \\
 &=& 6.
 \eeq
 By (\ref{eq1}), (\ref{eq2}) and Theorem~\ref{main theorem converse}, $R_s(\a)$ is Noetherian.
 
 (\ref{jacobian-three}) Since $R/\a^{(n)}$ is Cohen-Macaulay for $n\geq 1$ and $R_s(\a)$ is Noetherian by Theorem~\ref{analytic}, $\a$ is set theoretic complete intersection.
 \end{proof}

\end{example}

{\bf Acknoledgement:} We thank Prof. S. Goto for many useful discussions during his visit at IIT Bombay.

 \end{document}